

\documentclass[12pt]{amsart}
\usepackage{amssymb,latexsym}
\usepackage{enumerate}
\usepackage{hyperref}

\makeatletter \@namedef{subjclassname@2010}{%
  \textup{2010} Mathematics Subject Classification}
\makeatother



\newtheorem{Theorem}{Theorem}

\newtheorem{Lemma}{Lemma}
\newtheorem{Corollary}{Corollary}
 \newtheorem*{UnnumberedTheorem}{Theorem}
 \newtheorem*{Problem}{Problem}



\newcommand{\FF}[0]{\mathbb F}



\renewcommand{\aa}[0]{\textbf{\textit{a}}}

\renewcommand{\bar}[1]{\overline{#1}}

\newcommand{\eps}[0]{\varepsilon}

\newcommand{\leg}[2]{\left(\frac{#1}{#2}\right)}

\newcommand{\lr}[1]{\left(#1\right)}


\begin{document}


\baselineskip=17pt



\title{Estimates for Character Sums with Various Convolutions}

\author[Brandon Hanson]{Brandon Hanson} \address{Pennsylvania State University\\
University Park, PA}
\email{bwh5339@psu.edu}

\date{}
\maketitle

\begin{abstract}
We provide estimates for sums of the form \[\left|\sum_{a\in A}\sum_{b\in B}\sum_{c\in
C}\chi(a+b+c)\right|\]
and 
\[\left|\sum_{a\in A}\sum_{b\in B}\sum_{c\in
C}\sum_{d\in D}\chi(a+b+cd)\right|\]
when  $A,B,C,D\subset \FF_p$, the field with $p$ elements and $\chi$ is a non-trivial multiplicative character modulo $p$.
\end{abstract}
\section{Introduction}
\sloppy
In analytic number theory, one is often concerned with estimating a bilinear sum
of the form \begin{equation}\label{bilinear}S=\sum_{\substack{1\leq m\leq M\\
1\leq n\leq N}}a_mb_nc_{m,n}\end{equation} where $a_m,\ b_n$ and $c_{m,n}$ are
complex numbers. The standard way to handle this sum is to apply the
Cauchy-Schwarz inequality so that \begin{align*}|S|^2&\leq \lr{\sum_{1\leq m\leq
M}|a_m|\left|\sum_{1\leq n\leq N}b_nc_{m,n}\right|}^2\\&\leq\lr{\sum_{1\leq
m\leq M}|a_m|^2}\lr{\sum_{1\leq n_1,n_2\leq N}b_{n_1}\bar{b_{n_2}}\sum_{1\leq m\leq
M}c_{m,n_1}\bar{c_{m,n_2}}}.\end{align*} \noindent One usually has that
\[\sum_{1\leq m\leq M}c_{m,n_1}\bar{c_{m,n_2}}\] is small when $n_1\neq n_2$, so
that the second factor is essentially dominated by the \emph{diagonal terms}
where $n_1=n_2$.

For instance, suppose $p$ is a prime number and denote by $\FF_p$ the field with $p$ elements. We
write $e_p(u)=e^{2\pi i u/p}$ and we denote by $\chi$ a multiplicative (or Dirichlet) character
modulo $p$.
Two well-known sums of the form (\ref{bilinear}) are
\begin{equation}\label{Paley}S_\chi(A,B)=\sum_{a\in A}\sum_{b\in
B}\chi(a+b)\end{equation} and
\begin{equation}\label{exponential}T_x(A,B)=\sum_{a\in A}\sum_{b\in
B}e_p(xab)\end{equation}  where $A$ and $B$ are subsets of $\FF_p$.

By the triangle inequality, each of these sums are at most $|A||B|$, but we expect an upper bound of the form $|A||B|p^{-\eps}$ for some positive $\eps$. Indeed, using the Cauchy-Schwarz inequality as above, and
orthogonality of characters, one can prove that the sums (\ref{Paley}) and
(\ref{exponential}) are at most $(p|A||B|)^{1/2}$. Such an estimate is better
than the trivial estimate when $|A||B|>p$.

For the second sum, (\ref{exponential}), the bound $(p|A||B|)^{1/2}$ is quite
sharp. Indeed, if $A=B=\{n:1\leq n\leq \delta p^{1/2}\}$ for a small number
$\delta>0$, then products $ab$ with $a,b\in A$ are at most $\delta^2p$ (here we are identifying residues mod $p$ with integers between $0$ and $p-1$). It follows that $|e_p(ab)-1|\ll \delta^2$, so the summands in (\ref{exponential}) are
essentially constant and there is little cancellation. On the other
hand, it is conjectured that the first sum, (\ref{Paley}), should exhibit
cancellation even for small sets $A$ and $B$. From now on, we will call
(\ref{Paley}) the \emph{Paley sum}. The problem of obtaining good estimates for
it beyond the range $|A||B|>p$ appears to be quite hard.

In this article we investigate character sums which are related to the Paley
sum. First, we motivate its study with the following
question of S\'ark\"ozy:

\begin{Problem}[S\'ark\"ozy]\label{Sarkozy}
Are the quadratic residues modulo $p$ a sumset? That is, do there exist sets 
$A, B\subset \FF_p$ each of size at least two, and with the set $A+B$ equal 
to the set of quadratic residues?
\end{Problem}

One expects that the answer to the above question is no.
Heuristically, if $B$ contains two elements $b$ and $b'$ we would require that
$A+b$ and $A+b'$ are both subsets of the quadratic residues. But we expect that
$a+b$ is a quadratic residue half of the time, and we expect that $a+b'$ also be
a residue half of the time \emph{independent} of whether or not $a+b$ is a
quadratic residue. So if $A+B$ consisted entirely of quadratic residues then
many unlikely events must have occurred. For $A+B$ to consist of all the
quadratic residues would be shocking. The difficulty in this problem
is establishing the aforementioned independence.

In \cite{Sh}, Shkredov showed the the quadratic residues are never of the form
$A+A$. In more recent work, \cite{Sh2}, he also ruled out the case that $Q=A+B$ when $A$ is a multiplicative subgroup. By way of character sum estimates, Shparlinski, building on work of S\'ark\"ozy \cite{Sar} has proved that:

\begin{UnnumberedTheorem}[S\'ark\"ozy, Shparlinski]
If $A, B\subset \FF_p$, each of size at least two with the set $A+B$ equal to 
the set of quadratic residues then $|A|$ and $|B|$ are within a constant factor of $\sqrt p$.
\end{UnnumberedTheorem}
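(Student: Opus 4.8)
The plan is to first determine the product $|A||B|$ up to a constant, and then to break the symmetry between $A$ and $B$ in order to control each size separately. Since $A+B$ equals the set $Q$ of quadratic residues, every sum $a+b$ with $a\in A,\ b\in B$ is a nonzero quadratic residue, so $\chi(a+b)=1$ for the quadratic character $\chi$. Hence the Paley sum (\ref{Paley}) satisfies $S_\chi(A,B)=\sum_{a\in A}\sum_{b\in B}\chi(a+b)=|A||B|$. Feeding this into the Cauchy--Schwarz estimate already described for (\ref{Paley}) --- whose off-diagonal contribution is governed by the elementary identity $\sum_{x\in\FF_p}\chi((x+b_1)(x+b_2))=-1$ for $b_1\neq b_2$ --- gives $|A|^2|B|^2=|S_\chi(A,B)|^2\le|A||B|(p-|B|)$, so $|A||B|\le p$, while the trivial covering bound gives $|A||B|\ge|A+B|=|Q|=(p-1)/2$. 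Thus $|A||B|\asymp p$. I stress that this step alone cannot finish the argument: Cauchy--Schwarz is symmetric in $A$ and $B$ and only ever sees the product, so it is consistent with, say, $|A|=2$ and $|B|\asymp p$.

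To obtain individual bounds I would inject an asymmetry through a higher even moment, which is where the Weil bound enters. Assume $|A|\le|B|$; it suffices to prove $|A|\gg\sqrt p$, since then $|A||B|\le p$ forces $|B|\ll\sqrt p$ and the product bound makes both sizes $\asymp\sqrt p$. The key observation is that $\sum_{a\in A}\chi(x+a)=|A|$ for every $x\in B$, because all the $x+a$ lie in $Q$. Consider the fourth moment $\sum_{x\in\FF_p}\big(\sum_{a\in A}\chi(x+a)\big)^{4}$. Expanding and applying the Weil bound $|\sum_x\chi((x+a_1)\cdots(x+a_4))|\le 3\sqrt p$ to every tuple whose linear factors do not pair into a perfect square, while the $O(|A|^2)$ paired tuples each contribute $p+O(1)$, yields a total of size $\ll|A|^2p+|A|^4\sqrt p$. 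Since the summand is nonnegative and equals $|A|^4$ on all of $B$, we get $|B||A|^4\ll|A|^2p+|A|^4\sqrt p$, that is $|B|\ll p/|A|^2+\sqrt p$. Combining this with $|B|\ge(p-1)/(2|A|)$ and clearing denominators gives $p\ll p/|A|+|A|\sqrt p$, which forces the clean dichotomy $|A|=O(1)$ or $|A|\gg\sqrt p$.

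It remains to exclude the degenerate alternative $|A|=O(1)$, and this is the fiddliest part and the real obstacle. For $|A|\ge3$ one uses $B\subseteq\bigcap_{a\in A}(Q-a)$, whose cardinality is $2^{-|A|}p+O(|A|\sqrt p)$ by the Weil bound, together with the covering inequality $|A|\,|B|\ge(p-1)/2$; these are incompatible once $2^{|A|}>2|A|$, i.e. for $|A|\ge3$. The surviving case $|A|=2$ is genuinely delicate: writing $A=\{a_1,a_2\}$ and $d=a_2-a_1$, near-equality in the previous bounds forces the two translates $a_1+B$ and $a_2+B$ to tile $Q$ up to an $O(\sqrt p)$ error, i.e. $\one_{\{0,d\}}*\one_{B}\approx\one_{Q}$. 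I would rule this out by a Gauss-sum computation: choosing $\xi$ with $\xi d$ nearest to $p/2$ makes $|1+e_p(\xi d)|\asymp 1/p$, while $|\widehat{\one_Q}(\xi)|\asymp\sqrt p$, so the approximate tiling identity forces $|\widehat{\one_B}(\xi)|\gg p^{3/2}$, contradicting Parseval $\sum_\xi|\widehat{\one_B}(\xi)|^2=p|B|\le p^2$. The overall obstacle is thus precisely the passage from the product estimate to individual estimates: the elementary second-moment machinery is blind to it, and one must both amplify via Weil-type higher moments and separately destroy the small-set configurations by hand.
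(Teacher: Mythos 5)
The paper itself does not prove this statement: it quotes it as a theorem of S\'ark\"ozy and Shparlinski (see \cite{Sar}, \cite{Shp}), and the only ingredient it derives is exactly your first paragraph, namely $|A||B|=S_\chi(A,B)\leq(p|A||B|)^{1/2}$, hence $|A||B|\leq p$, together with the trivial $|A||B|\geq|A+B|=(p-1)/2$. So your second and third paragraphs are a genuine reconstruction of the cited result rather than a variant of anything in the paper. The core of your argument is correct: since $\chi$ is the real quadratic character, the summand $\lr{\sum_{a\in A}\chi(x+a)}^4$ is nonnegative and equals $|A|^4$ at every $x\in B$; Weil's bound plus the $O(|A|^2)$ paired quadruples give the complete-sum estimate $\ll|A|^2p+|A|^4\sqrt p$, hence $|B|\ll p/|A|^2+\sqrt p$, and combining with $|A||B|\gg p$ yields the dichotomy $|A|=O(1)$ or $|A|\gg\sqrt p$. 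The intersection bound $|B|\leq 2^{-|A|}p+O(|A|\sqrt p)$ against $|B|\geq(p-1)/(2|A|)$ then correctly eliminates $3\leq|A|=O(1)$ for large $p$. This moment-plus-positivity amplification is an attractive, self-contained route to individual (not just product) bounds.

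The one step that fails as written is the $|A|=2$ endgame. You bound the overlap $W=(a_1+B)\cap(a_2+B)$ only by $O(\sqrt p)$, and your contradiction needs $|\hat{\one_Q}(\xi)+\hat{\one_W}(\xi)|\gg\sqrt p$ at the special frequency $\xi$; but the only available bound on the error is $|\hat{\one_W}(\xi)|\leq|W|$, and $O(\sqrt p)$ --- with the constant your Weil-type intersection bound actually produces, roughly $4\sqrt p$ --- swamps $|\hat{\one_Q}(\xi)|\approx\sqrt p/2$, so no contradiction follows from the stated estimates. The repair is the exact identity you already quote in your first paragraph: $\sum_{x}\chi((x+a_1)(x+a_2))=-1$ gives $|(Q-a_1)\cap(Q-a_2)|\leq(p-1)/4$, hence $|B|\leq(p-1)/4$; on the other hand $(p-1)/2=|Q|=|(a_1+B)\cup(a_2+B)|\leq2|B|$. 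Therefore $|B|=(p-1)/4$, the union is disjoint, $W=\emptyset$, and the tiling $\one_{\{a_1,a_2\}}\ast\one_B=\one_Q$ is \emph{exact}. With that, your Gauss-sum step is clean: choose $\xi\neq0$ with $\xi d$ nearest $p/2$, so that $|1+e_p(\xi d)|\ll1/p$ while $|\hat{\one_Q}(\xi)|\geq(\sqrt p-1)/2$, forcing $|\hat{\one_B}(\xi)|\gg p^{3/2}$, which contradicts the trivial bound $|\hat{\one_B}(\xi)|\leq|B|<p$ (Parseval is not even needed). With this one repair your proof is complete.
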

\noindent As a consequence of this theorem and a combinatorial theorem of Ruzsa,
one can deduce that the quadratic residues are not of the for $A+B+C$ with each
set of size at least two.

S\'ark\"ozy's question is settled by improved bounds for
the Paley sum. Since each sum $a+b$ with $a\in A$ and $b\in B$ is a quadratic residue we
have \[|A||B|=\sum_{a\in A}\sum_{b\in B}\leg{a+b}{p}\leq (p|A||B|)^{1/2}.\] So
$|A||B|\leq p$ and this estimate just fails to resolve S\'ark\"ozy's problem.
So even improving upon the bound
$S_{\leg{\cdot}{p}}(A,B)\leq (p|A||B|)^{1/2}$ by a constant factor would be
worthwhile.

Breaking past this barrier, often called the \emph{square-root
barrier}, is hard. In practice, the usual way we estimate character sums is via
the method of completion. One way of doing so was outlined at the beginning
of this article. With this method, we replace a short sum over a subset
$A\subset \FF_p$ with a complete sum over the whole of $\FF_p$ which lets us to use orthogonality. However some terms, the diagonal terms, exhibit no
cancellation at all and must be accounted for. By completing the sum we create
more diagonal terms, and the resulting loss becomes worse than trivial when the set $A$ is too small. One can dampen the loss from completion by using a higher moment (using H\"older's inequality as opposed to Cauchy-Schwarz). This was the
idea used by Burgess in his work on character sums in \cite{Bu1} and \cite{Bu2}, and it is still one of the only manoeuvres we have for
pushing past the square-root barrier. Still, with higher moments the
off-diagonal terms become more complicated and we must settle for worse
orthogonality estimates, which can be limiting.

In the case of the Paley sum, the square-root barrier is more than just a
consequence of our methods. Suppose $q=p^2$ so that $\FF_p$ is a subfield of
$\FF_q$ and each element in $\FF_p$ is the square of an element in $\FF_q$.
Since $\FF_p$ is closed under addition, any sum $a+b$ with $a,b\in\FF_p$ is also
a square in $\FF_q$. So, if we take $A=B=\FF_p$ and $\chi$ the quadratic
character on $\FF_q$, then there is no cancellation in $S_\chi(A,B)$. This
shows that, for the Paley sum over $\FF_q$, the bound $|S_\chi(A,B)|\leq
(q|A||B|)^{1/2}$ is essentially best possible. In order to improve the bound for the Paley
sum past the square-root barrier, we need to use an argument which is sensitive
to the fact that $\FF_p$ has no subfields. Such arguments are hard to come by
and this is perhaps the greatest source of difficulty in the problem.

There have been improvements to estimates for the Paley sum when the sets $A$
and $B$ have a particularly nice structure. In \cite{FI}, Friedlander and
Iwaniec improved the range in which one can obtain non-trivial estimates when
the set $A$ is an interval. This constraint was weakened by Mei-Chu Chang in \cite{C1}
to the case where $|A+A|$ is very small:

\begin{UnnumberedTheorem}[Chang]\label{Chang}
Suppose $A,B\subset \FF_p$ with $|A|,|B|\geq p^\alpha$ for some
$\alpha>\frac{4}{9}$ and such that $|A+A|\leq K|A|$. Then there is a constant
$\tau=\tau(K,\alpha)$ such that for $p$ sufficiently large and any non-trivial
character $\chi$, we have \[|S_\chi(A,B)|\leq
|A||B|p^{-\tau}.\]
\end{UnnumberedTheorem}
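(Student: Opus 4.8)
The plan is to break the square-root barrier by grafting an amplification step, driven by the additive structure of $A$, onto the method of completion, in the spirit of Burgess \cite{Bu1, Bu2} and its Paley-sum incarnation due to Friedlander and Iwaniec \cite{FI}. First I would record the structural consequences of the hypothesis $|A+A|\le K|A|$. By the Pl\"unnecke--Ruzsa inequalities every iterated sumset is controlled, $|\ell A-mA|\le K^{\ell+m}|A|$, and in particular the additive energy satisfies $E(A)\ge |A|^{3}/K$. By Freiman's theorem in Ruzsa's form, $A$ lies in a proper generalized arithmetic progression $P$ of rank $d=d(K)$ with $|P|\le C(K)|A|$; after passing to a dense sub-progression I may assume $A$ fills a fixed proportion $c(K)$ of a progression $Q$ that is genuinely long in each of its $d$ generator directions $v_1,\dots,v_d$. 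The gain is that $Q$, like an interval, is \emph{approximately translation-invariant}: for $s$ running over the first $\eta L_i$ multiples of $v_i$, one has $|Q\cap(Q+s)|\ge(1-\eta)|Q|$. This manufactures, from small doubling alone, the family of admissible shifts that an interval possesses for free, and it is these shifts that power the amplification.

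For each fixed $b$ the inner sum $\sum_{a\in A}\chi(a+b)$ plays the role of a Burgess sum over the structured set $A$ (translated by $b$), and the sizes of both $A$ and $B$ will enter only through the final optimization. Fixing a shift $s$ with $|A\cap(A+s)|\ge\tfrac12|A|$, translating the $A$-variable by $s$ alters $S_\chi(A,B)$ only through the symmetric difference $A\triangle(A+s)$, so averaging over a box $H$ of $h$ admissible shifts gives
\[
S_\chi(A,B)=\frac{1}{h}\sum_{s\in H}\sum_{a\in A}\sum_{b\in B}\chi(a+s+b)+O\!\lr{\frac{|A||B|}{L}},
\]
where $L$ is the common length of the shift-progressions. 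Writing $m=a+s$ and collecting multiplicities into a weight $r(m)=\#\{(a,s)\in A\times H:a+s=m\}$ — supported on a set of size $\le C(K)|A|$, of total mass $|A|h$, and of small $\ell^{\infty}$-norm — converts the main term into a bilinear form $\tfrac1h\sum_m r(m)\sum_{b\in B}\chi(m+b)$. To separate the structured variable from $b$ and replace the long $b$-sum by a short completion I would then insert a multiplicative dilation exactly as in Burgess, using $\chi(t(m+b))=\chi(t)\chi(m+b)$, and apply H\"older's inequality in the dilation/completion variable with a well-chosen exponent $2k$.

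After completing and applying the Weil bound, I am left to estimate a weighted, $\chi$-twisted count of solutions of $\prod_i(m_i+b)=\prod_j(m_j'+b)$. Every non-degenerate $b$-sum is $O(k\sqrt{p})$, so the off-diagonal is tame; the diagonal is a weighted additive-energy count on the support of $r$, and this is exactly where the Pl\"unnecke--Ruzsa bounds $|\ell A-mA|\le K^{\ell+m}|A|$ pin the number of coincidences near $|A|^{k}$ rather than $|A|^{2k}$. Balancing the amplification length $h$, the dilation range, the moment $2k$, and the completion loss against the error term $O(|A||B|/L)$, and invoking $|A|,|B|\ge p^{\alpha}$, should produce a power saving $|S_\chi(A,B)|\le|A||B|p^{-\tau}$ precisely once $\alpha>\tfrac49$; the dependence on $K$ enters only through the Freiman rank $d(K)$ and the sumset constants, so $\tau=\tau(K,\alpha)$ is independent of $p$.

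The hard part will be the amplification itself. Unlike an interval, a small-doubling set is only \emph{approximately} shift-invariant, and along a single difference $s\in A-A$ the overlap $|A\cap(A+s)|$ can be as small as $|A|/K^{2}$; securing enough genuinely long shift-directions while simultaneously keeping the symmetric-difference error $O(|A||B|/L)$ below the target $|A||B|p^{-\tau}$ forces one to work inside a dense model progression and to track carefully the interplay between $d(K)$, the moment $2k$, and the threshold $\alpha$. Making this honest — so that the completion loss and the Weil off-diagonal close off at $\alpha=4/9$ rather than at the trivial $\alpha=1/2$ — is the crux, and is precisely what a direct application of completion cannot achieve.
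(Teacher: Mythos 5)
This statement is not proved in the paper at all: it is Chang's theorem, imported from \cite{C1}, and the paper's only remark about its proof is that Freiman's theorem enables the Burgess change of variables $a\mapsto a+bc$. Your proposal correctly identifies that route (Freiman's theorem plus a Burgess-type amplification, H\"older, and Weil), so the high-level plan is sound and matches the known argument. But there is a genuine gap at the very step you flag and then leave unrepaired: the amplification by additive shifts fails as stated. Averaging over shifts $s$ and bounding the error by $|A\triangle(A+s)||B|$ requires $A$ \emph{itself} to be almost invariant under the shifts, but Freiman's theorem only places $A$ inside a progression $Q$ with density $c(K)$ bounded away from $1$. Approximate invariance of $Q$ does not transfer to $A$: one only gets $|A\setminus(A+s)|\le |Q\setminus(Q+s)|+|Q\setminus A|$, and the second term is $(1-c(K))|Q|\gg_K |A|$ no matter how small the shift is. So your error term $O(|A||B|/L)$ is wrong; generically the loss is $\Omega_K(|A||B|)$, which swamps any target saving $|A||B|p^{-\tau}$. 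The Burgess/Chang mechanism avoids this by using shifts of product form $a\mapsto a+uv$ combined with positivity and multiplicity counting inside the covering progression, not symmetric-difference averaging of $A$ along single GAP directions.

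The second gap is that your \qu{multiplicative dilation} is a gesture rather than a step. The identity $\chi(t(m+b))=\chi(t)\chi(m+b)$ buys nothing unless the additive shifts themselves carry product structure $uv$, so that after the substitution $x=(a+b)u^{-1}$ one obtains a weighted sum $\sum_x\nu(x)\left|\sum_{v}\chi(x+v)\right|$ that can be completed and estimated by Weil, exactly as in Lemma \ref{BasicBurgess} of this paper. The decisive difficulty --- and the actual content of Chang's proof, which is where both $\alpha>\frac{4}{9}$ and the shape of $\tau(K,\alpha)$ arise --- is bounding the multiplicity of the map $(a,u,v)\mapsto a+uv$, equivalently a multiplicative-energy count for sets living in a generalized arithmetic progression. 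Your appeal to Pl\"unnecke--Ruzsa cannot supply this: those inequalities control iterated \emph{additive} sumsets $|\ell A-mA|$, while the diagonal after Weil and the $\ell^2$-norm of $\nu$ involve \emph{multiplicative} coincidences among elements of a GAP, a quantity Pl\"unnecke--Ruzsa says nothing about. Since you explicitly concede that \qu{making this honest} is the crux, the proposal as written is a plausible plan with its two essential steps --- a valid amplification for dense subsets of GAPs and the multiplicative collision bound --- missing.
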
 
\noindent We remark that in light of Freiman's Theorem, which we will recall
shortly, the condition that $|A+A|$ has to be so small is still very
restrictive.

Often problems involving a sum of two variables,
called \emph{binary additive problems}, are hard. Introducing a third
variable gives rise to a \emph{ternary additive problem}, which may be 
tractable. In this paper we establish non-trivial bounds beyond the square-root
barrier for character sums with more than two variables. These results are
different from those mentioned above since they hold for all sets which are sufficiently large
- there are no further assumptions made about their structure. Our first theorem
is the following.

\begin{Theorem}\label{TripleSum}
Given subsets $A,B,C\subset \FF_p$ each of size $|A|,|B|,|C|\geq \delta\sqrt p$,
for some $\delta>0$, and a non-trivial character $\chi$, then we have
\[\left|\sum_{a\in A}\sum_{b\in B}\sum_{c\in
C}\chi(a+b+c)\right|=o_\delta(|A||B||C|).\]
\end{Theorem}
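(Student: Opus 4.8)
The plan is to pass to the Fourier/Gauss-sum side and then to split into a \emph{pseudorandom} case, which is handled by $L^2$-information alone, and a \emph{structured} case, where the square-root barrier must be broken using additive-combinatorial input together with Chang's theorem. Write $S$ for the triple sum in question. First I would open the character by the Gauss sum $\tau(\bar\chi)$, which has $|\tau(\bar\chi)|=\sqrt p$: setting $\hat{\one_X}(t)=\sum_x\one_X(x)e_p(tx)$, the identity $\chi(n)=\tau(\bar\chi)^{-1}\sum_t\bar\chi(t)e_p(nt)$ gives
\[
S=\frac{1}{\tau(\bar\chi)}\sum_{t\neq 0}\bar\chi(t)\,\hat{\one_A}(t)\hat{\one_B}(t)\hat{\one_C}(t),
\]
the frequency $t=0$ dropping out because $\bar\chi(0)=0$. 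Singling out one set and applying Cauchy--Schwarz in $t$ together with Parseval then reduces matters to the additive energy of the other two; for instance
\[
|S|^2\leq |A|\sum_{t\neq 0}|\hat{\one_B}(t)|^2|\hat{\one_C}(t)|^2=|A|\big(pE(B,C)-|B|^2|C|^2\big),
\]
where $E(B,C)$ counts solutions of $b_1+c_1=b_2+c_2$, and two further estimates follow by symmetry. Thus I am free to single out whichever variable is most convenient.

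The easy case is when one of the sets, say $C$, is Fourier-uniform, i.e. $\max_{t\neq 0}|\hat{\one_C}(t)|=o(|C|)$. Bounding $|\hat{\one_C}(t)|^2$ by its maximum and using Parseval for $B$, the displayed estimate becomes $|S|^2\leq |A|\,p\,|B|\max_{t\neq 0}|\hat{\one_C}(t)|^2$; since $|A||B|\geq\delta^2p$, the right-hand side is $o_\delta(|A|^2|B|^2|C|^2)$, which is exactly the claim. The point to stress is that here one never passes to a subset, so the full sum is controlled directly. In particular this disposes of every set (random sets, say) whose largest nonzero Fourier coefficient is $o$ of its size.

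The main obstacle is the structured case, where none of $A,B,C$ is Fourier-uniform, so each has a large Fourier coefficient and hence correlates with an arithmetic progression. Here the $L^2$ estimate only yields $O_\delta(|A||B||C|)$ and cannot be improved: for progressions of length $\asymp\sqrt p$ the convolution $\one_A*\one_B*\one_C$ is too concentrated, and genuine cancellation requires descending below the square-root barrier via Burgess's amplification. The route I would take is to quantify the structure, using Balog--Szemer\'edi--Gowers and Freiman's theorem to locate large subsets with small doubling modeled on generalized arithmetic progressions, and then to feed the shifted binary Paley sums in the decomposition $S=\sum_{c\in C}S_\chi(A,B+c)$ into Chang's theorem, which applies since $\sqrt p>p^{4/9}$ and supplies a saving $p^{-\tau}$.

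The delicate point, and the crux of the whole argument, is a tension between the two regimes. Forcing the uniform case to gain $o_\delta(1)$ pushes the uniformity threshold toward $0$, which in turn allows the structured case to include sets whose doubling grows with $p$; but Chang's saving $\tau(K,\alpha)$ degrades as the doubling constant $K$ grows, so a crude two-case split leaves an intermediate range uncovered. Reconciling these ranges, either by a multiscale or iterative refinement of the dichotomy or by a form of the Chang/Burgess input that is uniform over the relevant doubling, is where the real work lies. It is also the only place where one genuinely uses that $\FF_p$ has no proper subfields, the feature that makes cancellation possible at all beyond the square-root barrier.
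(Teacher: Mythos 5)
Your Gauss-sum opening and the resulting bound $|S|^2\leq |A|\,p\,E_+(B,C)$ are correct, and are precisely the paper's Lemma \ref{energy}; the Fourier-uniform case is therefore sound. The genuine gap is in your structured case, and it is concrete: the negation of Fourier-uniformity supplies only a single coefficient with $|\hat{\one_C}(t_0)|\geq\eta|C|$, and via $pE_+(C,C)=\sum_t|\hat{\one_C}(t)|^4$ this yields merely $E_+(C,C)\geq\eta^4|C|^4/p$, which for $|C|\asymp\sqrt p$ sits a factor $\asymp\sqrt p$ \emph{below} the maximal energy $|C|^3$. Feeding such weak energy into Balog--Szemer\'edi--Gowers produces a doubling constant $K$ growing like a power of $p$, a regime in which Freiman's theorem and Chang's theorem give nothing (the paper itself remarks that Freiman cannot accommodate even $|A+A|\leq|A|^{1+\delta}$). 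So the route \qu{non-uniform $\Rightarrow$ correlates with a progression $\Rightarrow$ Freiman/Chang} fails quantitatively. The hypothesis you actually need is \emph{near-maximal} additive energy, and it comes not from non-uniformity but from your own $L^2$ inequality read backwards: if $|S|\geq\eps|A||B||C|$, then $E_+(B,C)\geq\eps^2|A||B|^2|C|^2/p\gg_{\eps,\delta}$ maximal, because all three sets have size at least $\delta\sqrt p$.

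This repair also dissolves the \qu{intermediate range} tension that you correctly identify but leave open --- and it is dissolved by logical structure, not by the multiscale iteration or $K$-uniform Chang input you propose. Since the conclusion is only $o_\delta(|A||B||C|)$, the paper argues by contradiction with a \emph{fixed} $\eps$: the energy lower bound then has a constant depending only on $(\eps,\delta)$, Theorem \ref{BSG} together with Lemma \ref{sumset} yields a subset with doubling $K=O_{\eps,\delta}(1)$ \emph{independent of $p$}, and Chang's saving $p^{-\tau(K,\alpha)}$ (applicable since the subset retains size $\gg_{\eps,\delta}\sqrt p>p^{4/9}$) beats the fixed $\eps$ once $p$ is large. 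There is no uncovered range because there is no fixed uniformity threshold; the price is exactly that the theorem has no explicit rate. One further step you omit and would need: largeness of $S$ does not transfer to largeness of the subsum over a BSG subset, since subsums can cancel. The paper handles this by first passing to the set $A'$ of shifts $a$ with $|S_\chi(B,a+C)|$ individually large, and then pigeonholing the arguments of these complex numbers into a narrow sector (Lemma \ref{arg}), so that restriction to any further subset preserves a proportional lower bound; only then does the triangle inequality produce a single shift $c$ with $|S_\chi(A''',B+c)|$ large enough to contradict Chang's theorem.
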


There are analogous results for exponential sums. We mentioned above that the
sum $T_x(A,B)$ in (\ref{exponential}) also obeys the bound $|T_x(A,B)|\leq (p|A||B|)^{1/2}$.
While this bound may be sharp, Bourgain \cite{Bou} proved that with more
variables one can extend the range in which the estimate is non-trivial.

\begin{UnnumberedTheorem}[Bourgain]
There is a constant $C$ such that the following holds. Suppose $\delta>0$ and
$k\geq C\delta^{-1}$, then for $A_1,\ldots,A_k\subset \FF_p$ with $|A_i|\geq
p^\delta$ and $x\in\FF_p^\times$, we have \[\left|\sum_{a_1\in
A_i}\cdots\sum_{a_k\in A_k}e_p(xa_1\cdots
a_k)\right|<|A_1|\cdots|A_k|p^{-\tau}\] where $\tau>C^{-k}$.
\end{UnnumberedTheorem}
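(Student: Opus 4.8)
The plan is to reduce the multilinear sum to a bilinear one and estimate it through the multiplicative energy of product sets, the required energy bounds coming from the sum-product phenomenon in $\FF_p$. First I would normalize: replacing $A_1$ by $xA_1$ (which preserves all cardinalities) lets me assume $x=1$, so the goal is to bound
\[
S=\sum_{a_1\in A_1}\cdots\sum_{a_k\in A_k}e_p(a_1\cdots a_k),\qquad N=|A_1|\cdots|A_k|.
\]
Splitting the indices into two blocks $I$ and $J$, let $r(u)$ and $s(v)$ count representations $u=\prod_{i\in I}a_i$ and $v=\prod_{j\in J}a_j$, so that $S=\sum_{u,v}r(u)s(v)e_p(uv)$. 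Cauchy-Schwarz in $u$, followed by extending the $u$-sum to all of $\FF_p$ and using orthogonality of additive characters, gives
\[
|S|^2\le\Big(\sum_u r(u)^2\Big)\sum_{u\in\FF_p}\Big|\sum_v s(v)e_p(uv)\Big|^2=p\,E_I^{\times}E_J^{\times},
\]
where $E_I^{\times}=\sum_u r(u)^2$ and $E_J^{\times}=\sum_v s(v)^2$ are the multiplicative energies of the two blocks. Since $E_I^{\times}\ge N_I^2/|P_I|$ with $P_I$ the product set and $N_I=\prod_{i\in I}|A_i|$, this bound is nontrivial exactly when the energies are near-minimal and the product sets $P_I,P_J$ are large: if both exceed $p^{1/2+\tau}$ and the representation functions are nearly flat, then $|S|^2\le pN^2/(|P_I||P_J|)<N^2p^{-2\tau}$, as desired.

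This explains the hypothesis $k\ge C\delta^{-1}$. A product of $m$ sets each of size $\ge p^\delta$ should, in the absence of multiplicative structure, have product set of size about $\min(p,p^{m\delta})$; to force each block past the saturation scale $p^{1/2+\tau}$ I need roughly $1/(2\delta)$ factors per block, hence on the order of $\delta^{-1}$ factors in total. The engine that converts this heuristic into a theorem is the sum-product theorem in $\FF_p$ (Bourgain-Katz-Tao, Bourgain-Glibichuk-Konyagin): for $B\subseteq\fpx$ with $p^\delta\le|B|\le p^{1-\delta}$ one has $|B\cdot B|\ge|B|^{1+c}$ for some $c=c(\delta)>0$, together with its energy formulation, which is what actually controls $E_I^{\times}$ and $E_J^{\times}$ from above.

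The argument must, however, contend with a genuine obstruction: multiplicative subgroups. A subgroup $H$ satisfies $H\cdot H=H$ and $E^{\times}(H)=|H|^3$, so its product sets do not grow and its energy is maximal; the structural side of the sum-product theorem says this is essentially the only way growth can fail. I would therefore run the argument as a dichotomy. Either the iterated product sets grow and flatten, giving the energy bounds above directly; or the sets concentrate on a coset of a multiplicative subgroup, in which case I factor out the subgroup and invoke the Bourgain-Konyagin estimate $\big|\sum_{h\in H}e_p(yh)\big|\le|H|p^{-\tau_0}$, valid for $|H|\ge p^\delta$, to recover cancellation from the subgroup sum itself.

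The hard part is the multiplicative energy bound, not the Fourier reductions. Establishing that the iterated product sets have near-minimal energy is an inductive sum-product argument: adding each factor to a block and controlling the resulting energy relies on Balog-Szemer\'edi-Gowers and Pl\"unnecke-type inequalities, whose polynomial losses compound across the $\sim k$ factors. It is precisely this compounding, one essentially fixed power loss per factor, that degrades the final exponent to something exponentially small in the number of steps, yielding $\tau>C^{-k}$ rather than a clean power of $p$. Keeping these losses uniform in $p$ and making the subgroup dichotomy effective is where essentially all the difficulty lies.
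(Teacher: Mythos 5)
First, a point of order: the paper does not prove this statement at all. It is quoted as background, attributed to Bourgain with a citation to \cite{Bou}, and serves only as a foil for Theorem \ref{MixedSum} (the author explicitly says \qu{We cannot prove results of this strength}). So the only meaningful comparison is with Bourgain's actual argument, whose broad architecture your sketch does echo: Cauchy--Schwarz plus completion to reduce the multilinear sum to a bilinear one, control of the multiplicative energy of block products, sum-product input, and the multiplicative-subgroup obstruction handled via Bourgain--Glibichuk--Konyagin type estimates.

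As a proof, however, your proposal has a genuine gap, and you have located it yourself: everything hinges on \emph{upper} bounds for the energies $E_I^{\times}$, $E_J^{\times}$ of the iterated products, and these are asserted rather than established. The lower bound $E_I^{\times}\geq N_I^2/|P_I|$ is trivial; what the argument needs is the matching upper bound, i.e.\ near-flatness of $r$ and $s$ in $L^2$, and no off-the-shelf \qu{energy formulation} of the sum-product theorem delivers this. Growth of product sets ($|B\cdot B|\geq |B|^{1+c}$) does not imply energy flattening: a set can have a large product set while $\sum_u r(u)^2$ is dominated by a small structured piece. The real engine is an $L^2$-flattening lemma for multiplicative convolutions, proved via Balog--Szemer\'edi--Gowers together with a Freiman/Kneser-type inverse theorem in $\FF_p^{\times}$, iterated across the $\sim k$ factors --- and since the $A_i$ are distinct sets, one cannot even speak of a single product set stabilizing; organizing that induction is the substance of \cite{Bou}. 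Relatedly, your dichotomy is not a clean either/or: BSG extracts only a large subset with small product set, the inverse theorem places only part of a set near a subgroup coset, and the exceptional pieces must be recycled through the induction, with the compounding losses being precisely what degrades the saving to $\tau>C^{-k}$. In short: correct skeleton and correct identification of the obstruction, but the two load-bearing steps (the flattening estimate and the effective subgroup dichotomy) are named rather than carried out, so what you have is a roadmap to Bourgain's proof rather than a proof.
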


We cannot prove results of this strength. The reason is that one can play the
additive and multiplicative structures of the frequencies appearing in such
exponential sums and then leverage the Sum-Product Phenomenon to deduce some
cancellation. The structure of multiplicative characters is not so nice and we
rely on Burgess' method instead.

In Theorem \ref{TripleSum}, we would prefer a bound of the form
$|S_\chi(A,B,C)|\leq |A||B||C|p^{-\tau}$ for some positive $\tau$. However,
the proof of Theorem \ref{TripleSum} relies on Chang's Theorem, which only allows one to
estimate $S_\chi(A,B)$ past the square-root barrier under the hypothesis that
$|A+A|\leq K|A|$ for some constant $K$. This hypothesis plays a crucial part in
the proof of her theorem because it allows for the use of Freiman's
Classification Theorem:

\begin{UnnumberedTheorem}[Freiman]
Suppose $A$ is a finite set of integers such that $|A+A|\leq K|A|$. Then there
is a generalized arithmetic progression $P$ containing
$A$ and such that $P$ is of dimension at most $K$ and $\log(|P|/|A|)\ll K^{c}$
for some absolute constant $c$.
\end{UnnumberedTheorem}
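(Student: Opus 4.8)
The plan is to prove this through the Fourier-analytic route to Freiman's theorem developed by Ruzsa, with the polynomial-in-$K$ bounds coming from Chang's spectral lemma; the argument runs through control of iterated sumsets, modeling $A$ inside a cyclic group, locating additive structure by Fourier analysis, extracting a generalized arithmetic progression, and covering $A$ by boundedly many translates of it. First I would record the Pl\"unnecke--Ruzsa inequalities: from $|A+A|\leq K|A|$ one deduces $|nA-mA|\leq K^{n+m}|A|$ for all nonnegative integers $n,m$, so in particular $|2A-2A|\leq K^4|A|$, and every sumset of bounded order stays within a factor $K^{O(1)}$ of $|A|$. This estimate is what keeps all later manipulations from losing more than a constant depending on $K$.

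Next comes the Ruzsa modeling lemma. Since the difference set $8A-8A$ has size at most $K^{O(1)}|A|$, there is a subset $A'\subseteq A$ with $|A'|\geq |A|/8$ that is Freiman $8$-isomorphic to a subset $\tilde A$ of a cyclic group $G=\ZZ/N\ZZ$ of prime order $N\leq K^{O(1)}|A|$. The purpose is to move out of $\ZZ$, where finite sets carry no usable Fourier theory, into a group in which $\tilde A$ has density $\alpha=|\tilde A|/N\gg_K 1$, while faithfully preserving all additive relations of order up to $8$ --- exactly those we will invoke.

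The analytic heart is Bogolyubov's lemma together with Chang's spectral lemma. Writing $\|t\|$ for the distance from $t$ to the nearest integer, a Fourier computation shows that $2\tilde A-2\tilde A$ contains the Bohr set $\{x\in G:\|rx/N\|<1/4 \text{ for every }r\in R\}$, where $R=\{r\neq 0:|\hat{\one_{\tilde A}}(r)|\geq \tfrac14\alpha^{3/2}N\}$ is the large spectrum of $\tilde A$. A crude Parseval bound already gives $|R|\ll_K 1$, but to obtain the stated polynomial dependence I would instead invoke Chang's lemma, which places $R$ inside the set of $\{-1,0,1\}$-combinations of a dissociated set $\Gamma$ with $|\Gamma|\ll K^{O(1)}$. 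Feeding the $|\Gamma|$ generating frequencies into a geometry-of-numbers argument (Minkowski's second theorem applied to the lattice they cut out in $G$) then produces, inside this Bohr set, a proper generalized arithmetic progression $\tilde P$ of dimension $d\leq|\Gamma|$ and size $|\tilde P|\gg (cd)^{-d}N\gg_K |A|$.

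It remains to cover and transfer back. Pulling $\tilde P$ through the Freiman isomorphism gives a progression $P_0\subseteq 2A'-2A'$ with $|P_0|\gg_K|A|$; since $A'+P_0\subseteq 3A-2A$ has size $\leq K^5|A|\ll_K|P_0|$, Ruzsa's covering lemma places $A'$ inside $X+(P_0-P_0)$ for some $X$ with $|X|\ll_K 1$. As $P_0-P_0$ is again a progression of the same dimension and comparable size, and adjoining $|X|$ translates raises the dimension by at most $|X|$, the set $A'$ --- and, after a few more generators absorb $A\setminus A'$, the set $A$ itself --- lies in a generalized arithmetic progression $P$ whose dimension and whose $\log(|P|/|A|)$ are both polynomial in $K$, as claimed. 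The step I expect to be the main obstacle is the analytic core of the third stage: Bogolyubov's lemma is where small doubling first becomes genuine structure, and cutting the frequency count down from the exponential Parseval bound to the polynomial bound demanded here is precisely the content of Chang's spectral lemma, the most delicate ingredient; the geometry-of-numbers passage from a Bohr set to a progression, though classical, is also where the final dimension bound is won or lost, whereas the sumset calculus, the modeling, and the covering are comparatively mechanical once these two results are in hand.
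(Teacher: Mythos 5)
The paper does not prove this statement: Freiman's theorem is quoted there as classical background, solely to motivate Chang's theorem and to explain why the hypothesis $|A+A|\leq K|A|$ cannot easily be relaxed, so there is no internal proof to compare yours against. Judged against the literature it draws on, your sketch is the standard Ruzsa--Chang argument (Pl\"unnecke--Ruzsa, Ruzsa modeling into $\ZZ/N\ZZ$, Bogolyubov via the large spectrum, Chang's spectral lemma plus Minkowski's second theorem to turn the Bohr set into a proper progression, then Ruzsa covering), and the steps are deployed in the right order with the right auxiliary estimates. One cosmetic correction: your claim that Parseval only gives $|R|\ll_K 1$ and that Chang's lemma is what rescues polynomial dependence is misleading. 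After modeling, the density satisfies $\alpha^{-1}\leq K^{O(1)}$, so the Parseval bound $|R|\leq 4\alpha^{-2}$ is \emph{already} polynomial in $K$; what Chang's spectral lemma buys is a better exponent (roughly $\alpha^{-1}\log\alpha^{-1}$ generating frequencies instead of $\alpha^{-2}$), not polynomiality itself.

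The substantive gap is the dimension bound. The statement asserts that $P$ has dimension at most $K$, whereas your construction ends with rank $|\Gamma|$ plus the increments from the covering steps, i.e. rank $K^{O(1)}$ --- polynomial, but not $\leq K$. No amount of tightening the spectral or geometry-of-numbers constants fixes this, because the rank produced by the Bohr-set route is governed by the spectrum, not by $K$ directly. The missing ingredient is a final rank-reduction step: having placed $A$ in a progression of rank $d_0=K^{O(1)}$, one views $A$ inside $\ZZ^{d_0}$ and applies Freiman's dimension lemma --- if $A\subset\RR^{d}$ has affine dimension $d$ then $|A+A|\geq (d+1)|A|-d(d+1)/2$ --- which forces the affine dimension of $A$ to be at most $K-1$; one then rebuilds the progression inside the affine span of $A$, paying a further factor of size $\exp(K^{O(1)})$, which the bound $\log(|P|/|A|)\ll K^{c}$ absorbs. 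This is exactly the step by which Chang obtains rank $\lfloor K-1\rfloor$. Separately, your closing phrase about "a few more generators" absorbing $A\setminus A'$ should be made precise by a second application of Ruzsa covering: since $|A+A'|\leq K|A|\leq 8K|A'|$, one gets $A\subseteq Y+A'-A'$ with $|Y|\leq 8K$, which keeps the rank and size increments polynomial in $K$ rather than depending on $|A\setminus A'|$.
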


Using this classification theorem, one can make a change of variables $a\mapsto
a+bc$, which is the first step in a Burgess type argument. Freiman's Theorem is unable to accommodate the situation $|A+A|\leq |A|^{1+\delta}$, even for
small values of $\delta>0$, which is what is needed in order
to get a power saving in our bound for ternary character sums. To circumvent the
use of Freiman's Theorem, we can replace triple sums with sums of four
variables. By incorporating both additive and multiplicative convolutions we
arrive at sums of the form \[H_\chi(A,B,C,D)=\sum_{a\in A}\sum_{b\in
B}\sum_{c\in C}\sum_{d\in D}\chi(a+b+cd).\] In this way we have essentially
\emph{forced} a scenario where we can make use of the Burgess argument. By
introducing both arithmetic operations, we are able to weigh the additive
structure in one of the variables against the multiplicative structure of that
variable in order to use a Sum-Product estimate. Our second result is:

\begin{Theorem}\label{MixedSum}
Suppose $A,B,C,D\subset \FF_p$ are sets with $|A|,|B|,|C|,|D|>p^\delta$,
$|C|<\sqrt p$ and $|D|^4|A|^{56}|B|^{28}|C|^{33}\geq p^{60+\eps}$ for some
$\delta, \eps>0$. There is a constant $\tau>0$ depending only on $\delta$ and $\epsilon$ such that
\[|H_\chi(A,B,C,D)|\ll |A||B||C||D|p^{-\tau}.\] In the case that $|A|,|B|,|D|>p^\delta$,
$|C|\geq \sqrt p$ and $|D|^8|A|^{112}|B|^{56}\geq p^{87+\eps}$ then there is a
constant $\tau>0$ depending only on $\delta$ and $\epsilon$ such that \[|H_\chi(A,B,C,D)|\ll |A||B||C||D|p^{-\tau}.\]
\end{Theorem}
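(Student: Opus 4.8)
The plan is to set up a Burgess-style amplification in which the bilinear term $cd$ furnishes, at no cost, the multiplicative shift that one ordinarily has to introduce artificially; the price is that the resulting error term is controlled by a mixed additive--multiplicative energy, and this is where a sum-product estimate enters in place of the Freiman/Chang machinery used for Theorem \ref{TripleSum}.

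First I would discard the harmless contribution of $0$ and exploit the multiplicativity of $\chi$ to write, for $c\neq 0$,
\[\chi(a+b+cd)=\chi(c)\,\chi\!\left(c^{-1}(a+b)+d\right).\]
Setting $x=c^{-1}(a+b)$ and collecting terms, this recasts the sum as $H_\chi(A,B,C,D)=\sum_{x\in\FF_p}F(x)G(x)$, where $G(x)=\sum_{d\in D}\chi(x+d)$ is an incomplete character sum in which $d$ appears purely as an additive shift, and $F(x)=\sum_{c\in C}\chi(c)\,r(cx)$ packages the additive representation function $r=r_{A+B}$ of the sumset together with the dilations by $C$. The point of the maneuver is that $d$ now sits in exactly the linear position that Burgess's method is designed to handle.

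Next I would split the two factors by H\"older's inequality, bounding $H_\chi$ by a suitable even moment $\sum_{x}\lvert G(x)\rvert^{2k}$ against the $L^2$-norm of $F$, with the intermediate exponents absorbed by the trivial bound $\sum_{x}\lvert F(x)\rvert\le|A||B||C|$. The moment of $G$ is a complete sum; opening it and invoking the Weil bound collapses it to its diagonal, of size $\approx|D|^{k}p$. Expanding the square in $\|F\|_2^2$ reduces the remaining factor to the mixed energy
\[E=\#\bigl\{(s_1,s_2,c_1,c_2):\ s_i\in A+B,\ c_i\in C,\ c_2 s_1=c_1 s_2\bigr\},\]
the $s_i$ being weighted by $r$; this quantity counts the multiplicative coincidences between the sumset $A+B$ and the set $C$.

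The main obstacle is the estimation of $E$, where any non-trivial saving requires a genuine sum-product input. Writing the relation $c_2 s_1=c_1 s_2$ as $s_1/s_2=c_1/c_2$ displays $E$ as an incidence count between a point set built from $A+B$ and $C$ and the pencil of lines through the origin, so that a point-line incidence estimate of Szemer\'edi--Trotter / Helfgott--Rudnev type supplies the bound; it is precisely here that the absence of proper subfields of $\FF_p$ is used, and it is what lets us dispense entirely with the small-doubling hypothesis on $A$. The incidence bound takes two different shapes according to whether $C$ is smaller than $\sqrt p$ (the Szemer\'edi--Trotter regime) or larger (where the spectral/$L^2$ bound $E\approx|A+B|^2|C|^2/p$ takes over), and this dichotomy is exactly the origin of the two cases in the statement. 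Finally, balancing the H\"older exponent $k$ against these estimates and tracking the resulting powers of $|A|,|B|,|C|,|D|$ produces the admissibility conditions $|D|^4|A|^{56}|B|^{28}|C|^{33}\ge p^{60+\eps}$ and $|D|^8|A|^{112}|B|^{56}\ge p^{87+\eps}$.
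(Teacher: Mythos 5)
Your opening reduction is sound: for $c\neq 0$ one has $\chi(a+b+cd)=\chi(c)\chi(c^{-1}(a+b)+d)$, and the H\"older step with the $2k$-th moment of $G(x)=\sum_{d\in D}\chi(x+d)$ (controlled by the Weil bound exactly as in Lemma \ref{MomentBound}) correctly reduces the theorem to a non-trivial upper bound on the mixed energy
\[E=\left|\{(a_1,b_1,a_2,b_2,c_1,c_2)\in(A\times B)^2\times C^2:\ (a_1+b_1)c_2=(a_2+b_2)c_1\}\right|,\]
counted with the multiplicities of $r_{A+B}$. The genuine gap is the next sentence: you assert that a ``Szemer\'edi--Trotter / Helfgott--Rudnev'' point-line incidence estimate supplies the required bound on $E$ for \emph{arbitrary} sets. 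No such tool does this. Szemer\'edi--Trotter is false over $\FF_p$; the $\FF_p$ incidence theorems of Helfgott--Rudnev type apply to unweighted, balanced configurations and give only a minuscule saving, whereas your point set $(A+B)\times(A+B)$ carries weights $r_{A+B}$ (as large as $\min(|A|,|B|)$) and your line set is weighted by $r_{C/C}$. Unwinding the H\"older step, you need $E\ll (|A||B||C|)^2p^{-1/2-2k\tau}$, i.e.\ a genuine power-saving over the trivial bound $E\le(|A||B|)^2|C|$ in the regime where all sets have size about $p^{1/2-1/176}$ (the regime of Corollary \ref{MixedSum2}); completion arguments give $E\ll (|A||B||C|)^2/p+p|A||B||C|$, which is \emph{worse} than trivial there. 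Bounding such an energy without structural hypotheses is precisely the square-root-barrier difficulty the whole paper is organized around, and you have in effect assumed it away.

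The paper never proves such a bound; it circumvents the need for one by a bootstrapping dichotomy, and this is where its proof differs structurally from yours. Writing $|H_\chi|=\Delta|A||B||C||D|$, it pigeonholes (via Lemma \ref{arg}) a large subset $C_2\subset C$ on which the sums $S_\chi(A,B,c\cdot D)$ are large and aligned; then either Lemma \ref{energy} already forces $\Delta$ to be small, or else $E_+(C_2,C_2)$ must be large, in which case Balog--Szemer\'edi--Gowers (Theorem \ref{BSG}) extracts $C_3\subset C_2$ with small doubling, and only then does the sum-product input enter: Rudnev's Theorem \ref{EnergyEstimate} converts \emph{small doubling} into a non-trivial bound on $E_\times(C_3,C_3)$, which feeds into Lemma \ref{BasicBurgess} with the $2k$-th moment taken over the shifts $a+B$ (not over $D$) and with the energy $E_\times(C_3,D)\le\sqrt{E_\times(C_3,C_3)E_\times(D,D)}$. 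This detour is what replaces the unavailable energy estimate, and it is also the source of the exponents $(56,28,33,60)$, which encode the losses from the $\Delta$-pigeonholing, BSG, and the exponent $7/4$ in Rudnev's bound; a single-pass H\"older argument would not reproduce them. Your account of the second case is also wrong: the paper handles $|C|\ge\sqrt p$ not through a different ``regime'' of an incidence bound, but by partitioning $C$ into $\approx|C|/\sqrt p$ pieces of size at most $\sqrt p$ (Rudnev's estimate requires this size restriction) and applying the first case to each piece; substituting $|C_j|\approx\sqrt p$ into $|D|^4|A|^{56}|B|^{28}|C_j|^{33}\ge p^{60+\eps}$ is exactly where the condition $|D|^8|A|^{112}|B|^{56}\ge p^{87+\eps}$ comes from.
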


Theorem \ref{MixedSum} is simplified greatly when all sets in question are assumed to have roughly the same size:

\begin{Corollary}\label{MixedSum2}
Suppose $A,B,C,D\subset \FF_p$ are sets with $|A|,|B|,|C|,|D|>p^\delta$ with $\delta>\frac{1}{2}-\frac{1}{176}$. Then $H_\chi(A,B,C,D)\leq |A||B||C||D|p^{-\eps}$ for some $\eps>0$ depending only on $\delta$.
\end{Corollary}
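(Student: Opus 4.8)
The plan is to read the corollary off directly from Theorem \ref{MixedSum}: under the uniform hypothesis $|A|,|B|,|C|,|D|>p^\delta$, one of the two product conditions in that theorem must already be satisfied once $\delta>\frac12-\frac1{176}$, and the conclusion then follows with any $\eps$ smaller than the resulting $\tau$. I would begin by recording the symmetries of $H_\chi(A,B,C,D)=\sum_{a,b,c,d}\chi(a+b+cd)$: it is invariant under interchanging the additive variables $A\leftrightarrow B$ and under interchanging the multiplicative variables $C\leftrightarrow D$. This lets me relabel so that the larger additive set carries the larger exponent and, among $C$ and $D$, the larger set is placed in the role of $C$. I then split according to whether $|C|\ge\sqrt p$ or $|C|<\sqrt p$.

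The case $|C|\ge\sqrt p$ is the one that fixes the constant $\frac1{176}$, and it is where I would do the main bookkeeping. Here I invoke the second estimate of Theorem \ref{MixedSum}, whose hypothesis is $|D|^8|A|^{112}|B|^{56}\ge p^{87+\eps}$. Substituting $|A|,|B|,|D|>p^\delta$ bounds the left-hand side below by $p^{(8+112+56)\delta}=p^{176\delta}$, and since the three exponents sum to exactly $176$ the requirement collapses to $176\delta>87$, that is $\delta>\frac{87}{176}=\frac12-\frac1{176}$. The strict inequality leaves a positive surplus $176\delta-87$, which I would take as the $\eps$ fed into Theorem \ref{MixedSum}; the theorem returns $|H_\chi(A,B,C,D)|\ll|A||B||C||D|p^{-\tau}$ with $\tau=\tau(\delta)>0$, and choosing any $0<\eps'<\tau$ (legitimate once $p$ is large enough to absorb the implied constant) yields the stated bound. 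The clean form of the threshold is exactly the identity $\frac{87}{176}=\frac{176/2-1}{176}=\frac12-\frac1{176}$: the three exponents sum to $176$, while the target exponent $87$ is one less than half of it.

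In the complementary range $|C|<\sqrt p$ I would instead use the first estimate, with hypothesis $|D|^4|A|^{56}|B|^{28}|C|^{33}\ge p^{60+\eps}$. The two halves of the theorem are designed to dovetail at the boundary $|C|=\sqrt p$: inserting $|C|=p^{1/2}$ and $|A|,|B|,|D|>p^\delta$ turns the condition into $88\delta+\tfrac{33}{2}>60$, which is again precisely $\delta>\frac{87}{176}$. This is the crux, and also where I expect the main difficulty. If $|C|$ is bounded away from $\sqrt p$, the crude substitution $|C|>p^\delta$ only gives $p^{121\delta}$ on the left, and $121\delta>60$ would demand the stronger $\delta>\frac{60}{121}>\frac12-\frac1{176}$. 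My strategy to avoid this loss is to exploit the $C\leftrightarrow D$ symmetry so that whenever $\max(|C|,|D|)\ge\sqrt p$ the large set is routed into the $C$-slot and the previous case applies; the genuinely delicate configuration is then the residual one in which all four sets lie below $\sqrt p$, where the argument must lean on the boundary computation above rather than on a naive count of exponents, and this is the step I anticipate needing the most care to make fully rigorous.
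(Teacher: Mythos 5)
Your reduction to Theorem \ref{MixedSum} is indeed the intended derivation (the paper gives no separate proof of this corollary), and your treatment of the case $\max(|C|,|D|)\ge \sqrt p$ is complete and correct: by the $C\leftrightarrow D$ symmetry of $H_\chi$ one routes the large set into the $C$-slot, and since $8+112+56=176$, the hypothesis of the second part of the theorem becomes $176\delta>87$, i.e.\ $\delta>\tfrac{87}{176}=\tfrac12-\tfrac1{176}$.

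However, the residual case you flag is a genuine gap, and moreover one that cannot be closed from Theorem \ref{MixedSum} at the stated threshold, no matter how carefully you argue. Suppose all four sets have size roughly $p^\delta$ with $\tfrac{87}{176}<\delta<\tfrac{60}{121}$; note that $\tfrac{60}{121}=\tfrac12-\tfrac1{242}>\tfrac12-\tfrac1{176}$, so this window is nonempty (it contains $\delta=0.495$). Then $|C|,|D|<\sqrt p$, so the second part of the theorem is unavailable regardless of how you permute $C$ and $D$, while the first part requires $|D|^4|A|^{56}|B|^{28}|C|^{33}\ge p^{60+\eps}$, and since $4+56+28+33=121$ the left-hand side is only about $p^{121\delta}$ with $121\delta<60$; so its hypothesis fails as well. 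There is also no way to ``lean on the boundary computation'': the boundary $|C|=\sqrt p$ is the \emph{most favorable} instance of the first hypothesis, because $|C|$ enters with a positive exponent, so the binding case is $|C|\approx p^\delta$, not $|C|\approx\sqrt p$. The conclusion is that your instinct is right and the difficulty is real: direct substitution into Theorem \ref{MixedSum} proves the corollary only for $\delta>\tfrac{60}{121}=\tfrac12-\tfrac1{242}$, whereas the stated threshold $\tfrac12-\tfrac1{176}$ is what one gets from the second case alone (equivalently, from setting $|C|=\sqrt p$ in the first). This appears to be a slip in the paper rather than a missing idea on your part; with the threshold read as $\tfrac12-\tfrac1{242}$, your case analysis goes through verbatim and is exactly the intended argument.
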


\section*{Acknowledgements}
The author is grateful to John Friedlander and Antal Balog for much fruitful
discussion during the preparation of this article.

\section{Background}
Here we recall facts concerning multiplicative characters over finite
fields and additive combinatorics. For details concerning character sums, we
refer to Chapters 11 and 12 of \cite{IK}. The reference \cite{TV} is extremely
helpful for all things additive combinatorial.

Multiplicative characters are the characters $\chi$ of the group $\FF_q^\times$
which are extended to $\FF_q$ by setting $\chi(0)=0$. In order to carry out the
proof of a Burgess-type estimate, we shall need Weil's bound for character sums
with polynomial arguments. 

\begin{Theorem}[Weil]
Let $f\in\FF_p[x]$ be a polynomial with $r$ distinct roots over $\bar{\FF_p}$.
Then if $\chi$ has order $l$ and provided $f$ is not an $l$'th power over
$\bar{\FF_p}[x]$ we have
\[\left|\sum_{x\in\FF_p}\chi(f(x))\right|\leq r\sqrt p.\]
\end{Theorem}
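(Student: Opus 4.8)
The plan is to pass from the single sum $\sum_{x\in\FF_p}\chi(f(x))$ to the full family of analogous sums over every finite extension of $\FF_p$, package these into an $L$-function, and then read off the estimate from the location of its zeros. Concretely, for each $n\geq 1$ let $\chi_n=\chi\circ \mathrm{N}_{\FF_{p^n}/\FF_p}$ be the lift of $\chi$ to $\FF_{p^n}$ through the norm map, and set $S_n=\sum_{x\in\FF_{p^n}}\chi_n(f(x))$, so that $S_1$ is the sum we wish to bound. First I would form the generating series
\[L(T)=\exp\!\lr{\sum_{n=1}^\infty S_n\,\frac{T^n}{n}},\]
the goal being to show that $L(T)$ is in fact a polynomial all of whose reciprocal roots have modulus $\sqrt p$.

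Next I would identify $L(T)$ geometrically. The hypothesis that $f$ is not an $l$-th power in $\bar{\FF_p}[x]$ guarantees that the Kummer cover $y^l=f(x)$ is geometrically irreducible, and $\chi\circ f$ defines a nontrivial rank-one character sheaf on the affine line punctured at the $r$ distinct roots of $f$; since $\chi$ has order $l$ dividing $p-1$, this sheaf is tamely ramified. The sum $S_n$ is then the trace of the $n$-th power of Frobenius on the cohomology of this sheaf. The rationality part of Weil's theory (equivalently, the Grothendieck--Lefschetz trace formula) shows that $L(T)=\prod_{i}(1-\alpha_i T)$ is a polynomial, and a conductor computation — accounting for the ramification at the $r$ finite branch points and possibly at infinity — bounds its degree by $r$. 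Comparing coefficients after taking logarithmic derivatives then yields the clean identity
\[S_n=-\sum_i \alpha_i^{\,n}.\]

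The decisive input, and the genuine obstacle, is the Riemann Hypothesis for curves over finite fields: every reciprocal root satisfies $|\alpha_i|=\sqrt p$. This is exactly the deep theorem of Weil, and there is no way to avoid proving it or invoking it. Granting it, the estimate is immediate, since the number of terms is at most $r$:
\[\left|\sum_{x\in\FF_p}\chi(f(x))\right|=|S_1|=\Big|\sum_i\alpha_i\Big|\leq\sum_i|\alpha_i|\leq r\sqrt p.\]

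It is worth noting that one can establish the bound $|\alpha_i|=\sqrt p$ without the full apparatus of $\ell$-adic cohomology. The elementary route, due to Stepanov and refined by Schmidt and Bombieri, dispenses with $L$-functions altogether: one counts the $\FF_p$-points $(x,y)$ on the curve $y^l=f(x)$ by constructing, through a dimension count, an auxiliary polynomial that vanishes to high order at every rational point, which forces an upper bound on the number of such points; applying the same construction over $\FF_{p^2}$ produces a matching lower bound, and the two together pin down the main term and deliver the square-root cancellation. Either way, the hard part is precisely obtaining the matching upper and lower point counts that encode the Riemann Hypothesis, whereas the reduction of the character-sum estimate to that point count is the routine step.
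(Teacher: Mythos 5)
The paper does not prove this statement: it is quoted as Weil's theorem in the Background section, with the reader referred to Chapters 11 and 12 of [IK], so there is no internal proof to compare your argument against. Judged on its own, your sketch is the standard and correct reduction of the character-sum bound to the Riemann Hypothesis for curves, and you are right --- and commendably explicit --- that the RH input cannot be avoided; for a theorem whose entire content is RH over finite fields, a proposal that reduces to it cleanly and then cites it (or Stepanov's elementary method) is the appropriate shape of proof. Your outline has the right skeleton: rationality of $L(T)$ via the trace formula, a degree bound via the Euler--Poincar\'e formula, and the triangle inequality at the end. Three small points deserve sharpening. First, the hypothesis that $f$ is not an $l$'th power is used precisely to make the Kummer sheaf geometrically nontrivial, which kills $H^0_c$ and $H^2_c$; without this, $H^2_c$ contributes a main term of size $p$ and the identity $S_n=-\sum_i\alpha_i^n$ fails --- you gesture at this via irreducibility of the cover $y^l=f(x)$, but it is worth saying that this is exactly where the hypothesis enters. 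Second, purity: $H^1_c$ of an open curve is in general only mixed of weights $\leq 1$, so you get $|\alpha_i|\leq\sqrt p$ rather than equality; since your final step only uses the upper bound, this does not affect the conclusion, but the claim $|\alpha_i|=\sqrt p$ as stated is not quite right in all cases. Third, the Euler characteristic computation in fact gives at most $r-1$ eigenvalues (plus one if the sheaf is ramified at infinity, i.e., if $l\nmid\deg f$), which yields the sharper classical bound $(r-1)\sqrt p$ in the unramified-at-infinity case; the paper's stated bound $r\sqrt p$ is the weaker, safe version, and your degree bound of $r$ is consistent with it. Note also that the convention $\chi(0)=0$ means $S_1$ already equals the trace over the line punctured at the roots of $f$, so no correction terms arise there --- a detail your write-up silently, and correctly, relies on.
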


\begin{Lemma}\label{MomentBound}
Let $k$ be a positive integer and $\chi$ a non-trivial multiplicative character.
Then for any subset $A\subset\FF_p$ we have \[\sum_{x\in\FF_q}\left|\sum_{a\in
A}\chi(a+x)\right|^{2k}\leq |A|^{2k}2k\sqrt p+(2k|A|)^kp.\]
\end{Lemma}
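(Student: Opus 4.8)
The plan is to expand the $2k$-th moment by writing $|\sum_a \chi(a+x)|^{2k}$ as a product of $k$ factors of $\sum_a \chi(a+x)$ and $k$ factors of its conjugate, then sum over $x \in \FF_q$ and apply Weil's bound to the inner complete sum. First I would write
\[
\sum_{x\in\FF_q}\left|\sum_{a\in A}\chi(a+x)\right|^{2k}
=\sum_{a_1,\ldots,a_k\in A}\sum_{b_1,\ldots,b_k\in A}\sum_{x\in\FF_q}\chi\big((a_1+x)\cdots(a_k+x)\big)\overline{\chi}\big((b_1+x)\cdots(b_k+x)\big).
\]
(I am tacitly assuming $q=p$ here, matching the statement of Weil's bound as quoted; the $\sqrt p$ on the right-hand side confirms this.) The key observation is that for fixed $\aa=(a_1,\ldots,a_k)$ and $\bb=(b_1,\ldots,b_k)$, the inner sum over $x$ is a complete character sum $\sum_x \chi(f(x))$ where $f(x)=\prod_i (a_i+x)\prod_j (b_j+x)^{l-1}$, using $\overline{\chi}=\chi^{l-1}$ for $\chi$ of order $l$.

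Next I would split the count into two regimes according to whether the polynomial $f$ is an $l$-th power. The crucial combinatorial point is that $f$ is an $l$-th power precisely when the multiset $\{a_1,\ldots,a_k\}$ equals the multiset $\{b_1,\ldots,b_k\}$ (the roots at each $-a_i$ contribute exponent $1$ and at each $-b_j$ contribute exponent $l-1$, so the total exponent at a common value is a multiple of $l$ exactly when the two multisets coincide). For the off-diagonal tuples where $f$ is not an $l$-th power, Weil's bound applies: the polynomial has at most $2k$ distinct roots, so $|\sum_x \chi(f(x))|\leq 2k\sqrt p$, and there are at most $|A|^{2k}$ such tuples, contributing at most $|A|^{2k}\cdot 2k\sqrt p$. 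This accounts for the first term on the right-hand side.

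For the diagonal tuples, where $\{a_i\}=\{b_j\}$ as multisets, I would bound the inner sum over $x$ trivially by $p$ and count the number of such matching pairs. A pair of tuples with equal underlying multiset is determined by choosing the multiset and then two orderings; a convenient upper bound is to note there are at most $k!|A|^k \leq (2k|A|)^k$ (or more simply $k^k|A|^k$) such diagonal pairs, each contributing at most $p$. This gives the second term $(2k|A|)^k p$.

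The main obstacle, and the step requiring the most care, is the precise verification that $f$ being an $l$-th power over $\bar{\FF_p}[x]$ is equivalent to the multiset equality $\{a_i\}=\{b_j\}$ — one must track the multiplicities at each repeated root and confirm that the exponents sum to a multiple of $l$ exactly on the diagonal, handling the subtlety that $A$ may contain repeated choices among the $a_i$ so that roots can coincide within a single tuple. Once this equivalence is pinned down, the counting of diagonal pairs and the application of Weil's bound to the remainder are routine, and summing the two contributions yields the claimed inequality.
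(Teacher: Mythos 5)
Your overall strategy (expand the $2k$-th moment, apply Weil off a small exceptional set of tuples, bound the exceptional set trivially by $p$) is the same as the paper's, but your classification of the exceptional set is wrong, and this is a genuine gap rather than a technicality. You claim that $f(x)=\prod_i(x+a_i)\prod_j(x+b_j)^{l-1}$ is an $l$-th power over $\bar{\FF_p}[x]$ \emph{precisely} when the multisets $\{a_i\}$ and $\{b_j\}$ coincide. Only one direction of this is true. The multiplicity of the root $-v$ is $\alpha_v+(l-1)\beta_v\equiv \alpha_v-\beta_v \pmod l$, where $\alpha_v,\beta_v$ count occurrences of $v$ among the $a_i$'s and $b_j$'s respectively; so $f$ is an $l$-th power iff $\alpha_v\equiv\beta_v\pmod l$ for every $v$, which is strictly weaker than $\alpha_v=\beta_v$ for every $v$. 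Concretely, take $\chi$ quadratic ($l=2$), $k=2$, $\aa=(u,u)$, $\bb=(w,w)$ with $u\neq w$: the multisets differ, yet
\[
f(x)=(x+u)^2(x+w)^2=\bigl((x+u)(x+w)\bigr)^2
\]
is a perfect square, and $\sum_x\chi(f(x))=p-2$. Your proof places this tuple in the ``off-diagonal'' regime and applies Weil's bound $2k\sqrt p$ to it, which is false by an order of magnitude; there are on the order of $|A|^k$ such tuples, so the error cannot be absorbed.

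The fix is to abandon the equivalence and use only the correct \emph{necessary} condition, which is what the paper does: if some value occurs exactly once in the combined $2k$-tuple, its multiplicity in $f$ is $1$ or $l-1$ (or $p-2$ in the paper's normalization), each prime to $l$, so $f$ cannot be an $l$-th power. Hence every exceptional tuple must have each of its values occurring at least twice among all $2k$ coordinates, and the number of such tuples is at most (number of perfect matchings of $2k$ positions)$\times|A|^k=\frac{(2k)!}{2^k k!}|A|^k\leq(2k|A|)^k$. Bounding these by $p$ and the rest by $2k\sqrt p$ recovers exactly the stated inequality. Note that your final numerical bound happens to remain true because $(2k|A|)^k$ dominates your diagonal count $k!|A|^k$ anyway, but as written the argument justifying it is broken at precisely the step you yourself flagged as delicate.
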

\begin{proof}
Expanding the $2k$'th power and using that
$\bar\chi(y)=\chi(y^{p-2})$, we have
\begin{align*}
&\sum_{a_1,\ldots,a_{2k}\in
A}\sum_x\chi((x-a_1)\cdots(x-a_k)(x-a_{k+1})^{p-2}\cdots(x-a_{2k})^{p-2})\\
&=\sum_{\aa\in
A^{2k}}\sum_x\chi(f_{\aa}(x)).
\end{align*}
Here $f_{\aa}(t)$ is the
polynomial
\[f_{\aa}(X)=(X-a_1)\cdots(X-a_k)(X-a_{k+1})^{p-2}\cdots(X-a_{2k})^{p-2}.\]
By Weil's theorem, $\sum_x\chi(f_{\aa}(x))\leq 2k\sqrt p$ unless $f_{\aa}$ is an
$l$'th power, where $l$ is the order of $\chi$. If any of the roots $a_i$ of
$f_{\aa}$ is distinct from all other $a_j$ then it occurs in the above
expression with multiplicity 1 or $p-2$. Both $1$ and $p-2$ are prime to $l$
since $l$ divides $p-1$. Hence $f_{\aa}$ is an $l$'th power only provided all of its roots can be grouped into pairs. So, for all but at most
$\frac{(2k)!}{2^k k!}\leq (2k|A|)^k$ vectors $\aa\in A^{2k}$, we have the
estimate $2k\sqrt p$ for the inner sum. For the remaining $\aa$ we bound the sum
trivially by $p$. Hence the upper bound \[\sum_{x\in\FF_q}\left|\sum_{a\in
A}\chi(a+x)\right|^{2k}\leq |A|^{2k}2k\sqrt p+(2k|A|)^kp.\]
\end{proof}

We now turn to results from additive combinatorics. Let $A$ and $B$ be finite
subsets of an abelian group $G$. The \emph{additive energy} between $A$ and $B$
is the quantity \[E_+(A,B)=\left|\{(a,a',b,b')\in A\times A\times B\times B:a+b=a'+b'\}\right|.\]
One of the fundamental results on additive energy is the
Balog-Szemer\'edi-Gowers Theorem, which we use in the following form.

\begin{Theorem}[Balog-Szemer\'edi-Gowers]\label{BSG}
Suppose $A$ is a finite subset of an abelian group $G$ and \[E_+(A,A)\geq
\frac{|A|^3}{K}.\] Then there is a subset
$A'\subset A$ of size $|A'|\gg\frac{|A|}{K(\log(e|A|))^2}$ with \[|A'-A'|\ll
K^4\frac{|A'|^3(\log (|A|))^8}{|A|^2}.\] The implied constants are absolute.
\end{Theorem}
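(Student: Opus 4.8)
The plan is to prove the statement by the usual two-stage route: convert the additive-energy hypothesis into a dense bipartite graph supported on the popular differences of $A$, and then feed this graph into the graph-theoretic core of Balog--Szemer\'edi--Gowers to extract a large, richly connected subset. For the first stage I would write $r(x)=|\{(a,a')\in A\times A:a-a'=x\}|$, so that $\sum_x r(x)=|A|^2$ and $\sum_x r(x)^2=E_+(A,A)\ge |A|^3/K$, and set $P=\{x:r(x)\ge |A|/(2K)\}$ to be the popular differences. A one-line popularity estimate shows that the unpopular $x$ contribute at most half of the energy, since $\sum_{x\notin P}r(x)^2<\frac{|A|}{2K}\sum_x r(x)=\frac{|A|^3}{2K}\le\frac12 E_+(A,A)$; hence $\sum_{x\in P}r(x)^2\ge |A|^3/(2K)$, which forces $\sum_{x\in P}r(x)\ge |A|^2/(2K)$ and, via the defining lower bound on $P$, also $|P|\le 2K|A|$. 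I then form the bipartite graph $\Gamma$ on two copies of $A$, joining $a$ to $b$ precisely when $a-b\in P$; its number of edges is $\sum_{x\in P}r(x)\ge |A|^2/(2K)$, so $\Gamma$ has density $\gg 1/K$.

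The heart of the matter is the Balog--Szemer\'edi--Gowers graph lemma: from a bipartite graph of density $\gg 1/K$ on two classes of size $|A|$ one can extract a subset $A'$ of one class, of size $|A'|\gg |A|/(K(\log(e|A|))^2)$, such that \emph{every} pair $a,a'\in A'$ is joined by $\gg |A|/K^2$ paths of length two. I would prove this by dependent random choice: let $A'$ consist of those vertices that are well connected to a randomly chosen short configuration on the opposite class, bound in expectation the number of pairs of $A'$ having few common neighbours by a second-moment computation, and then delete the few offending vertices so that all surviving pairs are richly connected. Regularising the degrees on the opposite class by a dyadic pigeonhole, which is what makes the random choice behave well, is exactly the source of the powers of $\log(e|A|)$ in the statement. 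Upgrading \qu{most pairs are well connected} to \qu{every pair is well connected} while keeping $|A'|\gg |A|/K$ is the main obstacle; it is precisely the feature that separates Balog--Szemer\'edi--Gowers from a bare application of Cauchy--Schwarz.

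Finally I would read off the difference-set estimate. Each path $a\to b\to a'$ of length two with $a,a'\in A'$ records the identity $a-a'=(a-b)-(a'-b)$ with both $a-b$ and $a'-b$ in $P$, and distinct intermediate vertices $b$ give distinct representations of this kind; consequently every $d\in A'-A'$ is written at least $m\gg |A|/K^2$ times as a difference of two elements of $P$. Double counting these representations against the total supply $|P|^2\le (2K|A|)^2$ of ordered pairs from $P$ yields $|A'-A'|\ll |P|^2/m\ll K^4|A|$, a doubling bound polynomial in $K$. To reach the sharper exponent in the statement I would refine this last step---either by counting representations only within the rich structure produced by the graph lemma, or by applying the Pl\"unnecke--Ruzsa inequalities to $A'$---and re-express the result through $|A'|$; tracking the dyadic losses from the extraction then produces the stated bound $|A'-A'|\ll K^4|A'|^3(\log|A|)^8/|A|^2$.
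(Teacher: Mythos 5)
You should first know that the paper does not prove this theorem at all: it is quoted as a known result, attributed to Bourgain and Garaev (\qu{essentially a combination of the Lemmas 2.2 and 2.4 from \cite{BG}}, communicated by Roche-Newton), so the benchmark is their argument rather than anything in this article. Your first two stages follow the standard skeleton of that argument, and the computations there are correct: the popularity estimate $\sum_{x\notin P}r(x)^2<\frac{|A|}{2K}\sum_x r(x)\le\frac12 E_+(A,A)$, the consequences $\sum_{x\in P}r(x)\ge |A|^2/2K$ and $|P|\le 2K|A|$, and the density $\gg 1/K$ of the popular-difference graph are all fine. The every-pair path lemma you invoke is essentially Lemma 2.2 of \cite{BG}; your dependent-random-choice sketch glosses over the hardest point (upgrading \qu{most pairs} to \qu{every pair} without losing more than the stated logarithms), but as a cited ingredient this is acceptable in a sketch.

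The genuine gap is the endgame, and it is structural rather than a matter of constants. Your double count gives $|A'-A'|\le |P|^2/m \ll K^2|A|^2\cdot K^2|A|^{-1}=K^4|A|$ (up to logarithms), and this is \emph{strictly weaker} than the claimed bound $K^4|A'|^3(\log|A|)^8/|A|^2$ whenever $|A'|=o(|A|)$: at the guaranteed minimal size $|A'|\asymp |A|/(K(\log(e|A|))^2)$ the claimed bound reads $\ll K|A|(\log|A|)^2$, so your count is off by roughly a factor $K^3$. Moreover, since trivially $m\le|A|$, the representation count $|A'-A'|\le 4K^2|A|^2/m$ can never reach the stated bound at that size (it would require $m\gg K|A|/(\log|A|)^2$, which is impossible for large $K$), so no refinement of this particular count will do. Neither proposed repair bridges the gap: Pl\"unnecke--Ruzsa converts a doubling bound into bounds on iterated sum and difference sets and cannot improve the doubling bound itself; and \qu{re-expressing the result through $|A'|$} substitutes in the forbidden direction, since $|A'|\le|A|$ gives $K^4|A'|^3/|A|^2\le K^4|A|$, i.e.\ the inequality you possess is the weaker one. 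The factor $|A'|^3/|A|^2$, which makes the doubling constant \emph{improve} as $A'$ shrinks, comes out of the specific interplay between the path-multiplicity lemma and the counting lemma in \cite{BG} (their Lemmas 2.2 and 2.4), not from post-processing the cruder estimate; as written, your argument proves a legitimate but genuinely weaker Balog--Szemer\'edi--Gowers variant, not the statement quoted in the paper.
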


This version of the Balog-Szemer\'edi-Gowers Theorem has very good explicit
bounds, and is due Bourgain and Garaev. The proof is essentially a combination
of the Lemmas 2.2 and 2.4 from \cite{BG}. It was communicated to us by O.
Roche-Newton. Since we prefer to work with sumsets rather than difference sets
we have the following lemma which is a well-known application of Ruzsa's
Triangle Inequality.

\begin{Lemma}\label{sumset}
Suppose $A$ is a finite subset of an abelian group $G$. Then \[|A-A|\leq
\lr{\frac{|A+A|}{|A|}}^2|A|.\]
\end{Lemma}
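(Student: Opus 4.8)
The plan is to deduce this directly from Ruzsa's triangle inequality, which asserts that for any finite subsets $X,Y,Z$ of an abelian group one has
\[|Y|\,|X-Z|\leq |X-Y|\,|Y-Z|.\]
(If one wants a self-contained argument, this follows by fixing for each $w\in X-Z$ a representation $w=x_w-z_w$ and checking that the map $(w,y)\mapsto(x_w-y,\,y-z_w)$ embeds $(X-Z)\times Y$ into $(X-Y)\times(Y-Z)$; the sum of the two output coordinates recovers $w$, hence $x_w,z_w$, and hence $y$, so the map is injective.) The key idea is to convert the difference set $A-A$ appearing on the left into copies of the sumset $A+A$ on the right, by routing the inequality through the set $-A$ and exploiting the identity $A+A=A-(-A)$.

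Concretely, I would apply the inequality above with the specific choice $X=A$, $Y=-A$, and $Z=A$, and then simply identify the three cardinalities involved. Since negation is a bijection of $G$, we have $|Y|=|-A|=|A|$. Next, $X-Y=A-(-A)=A+A$, so $|X-Y|=|A+A|$. Finally $Y-Z=-A-A=-(A+A)$, and using once more that negation is a bijection, $|Y-Z|=|A+A|$ as well. Substituting these into $|Y|\,|X-Z|\leq|X-Y|\,|Y-Z|$ gives $|A|\,|A-A|\leq|A+A|^2$, and dividing through by $|A|$ yields exactly
\[|A-A|\leq \lr{\frac{|A+A|}{|A|}}^2|A|.\]

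Because the whole argument amounts to a single well-chosen substitution into a standard inequality, there is no genuine obstacle to overcome; the only point that requires care is the selection of the intermediate set $Y$. Taking $Y=A$ would leave difference sets on the right-hand side and produce no relationship with $A+A$. The essential observation is that inserting $-A$ as the middle term is precisely what converts both factors on the right into copies of $A+A$, which is what makes the sumset control the difference set.
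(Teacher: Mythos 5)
Your proof is correct and takes exactly the route the paper intends: the paper states this lemma without proof, describing it only as ``a well-known application of Ruzsa's Triangle Inequality,'' and your substitution $X=Z=A$, $Y=-A$ into $|Y|\,|X-Z|\leq |X-Y|\,|Y-Z|$ is the standard way to carry that out. The identification $A-(-A)=A+A$ and the self-contained injectivity argument for the triangle inequality are both accurate, so there is nothing to add.
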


We will prefer to work with the energy between a set and itself
rather than between distinct sets, so we need the following fact, which is a simple consequence of the Cauchy-Schwarz inequality.
\begin{Lemma}\label{triangle}
For sets $A$ and $B$ we have \[E_+(A,B)^2\leq E_+(A,A)E_+(B,B)\]
\end{Lemma}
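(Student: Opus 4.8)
The plan is to express the mixed energy $E_+(A,B)$ as an inner product of difference-representation functions and then apply Cauchy-Schwarz. For $x$ in the ambient group, write $r_{A-A}(x)=|\{(a,a')\in A\times A:a-a'=x\}|$ and similarly $r_{B-B}(x)$. The defining condition $a+b=a'+b'$ for $E_+(A,B)$ is equivalent to $a-a'=b'-b$, so grouping the quadruples according to the common value $x$ of this difference yields the identity
\[E_+(A,B)=\sum_x r_{A-A}(x)\,r_{B-B}(x).\]

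First I would apply the Cauchy-Schwarz inequality to this sum over $x$, obtaining
\[E_+(A,B)^2\leq\lr{\sum_x r_{A-A}(x)^2}\lr{\sum_x r_{B-B}(x)^2}.\]
It then remains to recognize each factor as a self-energy. Expanding $r_{A-A}(x)^2$ counts pairs of difference representations $((a_1,a_2),(a_3,a_4))$ with $a_1-a_2=a_3-a_4=x$; summing over $x$ removes the constraint on the common value and leaves precisely the quadruples with $a_1-a_2=a_3-a_4$, equivalently $a_1+a_4=a_3+a_2$, which is the definition of $E_+(A,A)$. The identical computation gives $\sum_x r_{B-B}(x)^2=E_+(B,B)$, and the lemma follows.

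There is no serious obstacle here; the only point requiring care is the bookkeeping that reindexes the additive-energy quadruples as pairs of difference representations, and checking that the convention for $E_+$ (defined through $a+b=a'+b'$) lines up correctly with the difference functions. An equivalent and perhaps even cleaner route is Fourier-analytic: writing $\hat{1_A}(\xi)=\sum_{a\in A}e_p(a\xi)$, one computes $E_+(A,B)=p^{-1}\sum_\xi|\hat{1_A}(\xi)|^2|\hat{1_B}(\xi)|^2$, and Cauchy-Schwarz in $\xi$ gives the claim immediately from $E_+(A,A)=p^{-1}\sum_\xi|\hat{1_A}(\xi)|^4$.
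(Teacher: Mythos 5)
Your proof is correct, and it is exactly the argument the paper has in mind: the paper states the lemma without proof, calling it ``a simple consequence of the Cauchy-Schwarz inequality,'' and your identity $E_+(A,B)=\sum_x r_{A-A}(x)r_{B-B}(x)$ followed by Cauchy-Schwarz (with $\sum_x r_{A-A}(x)^2=E_+(A,A)$) is the standard way to fill that in. Both your combinatorial bookkeeping and your Fourier-analytic alternative check out, though note the difference-function version is preferable here since the lemma is also invoked for multiplicative energy and in general abelian groups, where the mod-$p$ Fourier identity does not directly apply.
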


We now record a general version of Burgess' argument, which is an
application of H\"older's inequality and Weil's bound. This proof is distilled
from the proof of Burgess's estimate in Chapter 12 of \cite{IK}.

\begin{Lemma}
\label{BasicBurgess}
Let $A,B,C\subset \FF_p$ and suppose $\chi$ is a non-trivial multiplicative
character. Define \[r(x)=|\{(a,b)\in A\times B:ab=x\}|.\] Then for any
positive integer $k$, we have the estimate
\begin{align*}
\sum_{x\in\FF_p}r(x)\left|\sum_{c\in
C}\chi(x+c)\right|&\leq(|A||B|)^{1-1/k}E_\times(A,A)^{1/4k}E_\times(B,B)^{1/4k}\cdot\\
&\cdot\lr{|C|^{2k}2k\sqrt p+(2k|C|)^kp}^{1/2k}.
\end{align*}
\end{Lemma}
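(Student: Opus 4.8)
The plan is to prove this by the standard Burgess maneuver: a three-fold H\"older inequality that separates the weight $r(x)$ from the character sum, reserving a $2k$-th power of the character sum for the moment bound of Lemma \ref{MomentBound}. The key observation is that $r(x)$ carries two different pieces of arithmetic information. On one hand $\sum_{x}r(x)=|A||B|$, and on the other hand $\sum_x r(x)^2 = E_\times(A,B)$, the multiplicative energy (defined exactly as $E_+$ but with multiplication in place of addition). These two identities are what will produce the factors $(|A||B|)^{1-1/k}$ and $E_\times(A,B)^{1/2k}$ respectively, so the weight $r(x)$ must be split between them in the H\"older step.

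Concretely, I would write the summand as the product of three functions,
\[
r(x)\left|\sum_{c\in C}\chi(x+c)\right| = r(x)^{1-1/k}\cdot r(x)^{1/k}\cdot\left|\sum_{c\in C}\chi(x+c)\right|,
\]
and apply H\"older's inequality with exponents $p_1,p_2,p_3$ chosen so that $1/p_1+1/p_2+1/p_3=1$. The exponents are forced: I want the first factor to contribute $\sum_x r(x)$ and the second to contribute $\sum_x r(x)^2$, which requires $(1-1/k)p_1=1$ and $(1/k)p_2=2$, that is $1/p_1=1-1/k$ and $1/p_2=1/(2k)$; the remaining exponent is then $1/p_3=1/(2k)$, so $p_3=2k$. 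One checks directly that these three reciprocals sum to $1$. This is the only genuinely delicate point, and it is really just bookkeeping: the splitting $r(x)=r(x)^{1-1/k}r(x)^{1/k}$ is tuned precisely so that reserving a $2k$-th moment for the last factor makes the exponents admissible.

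With these exponents H\"older gives
\[
\sum_{x}r(x)\left|\sum_{c\in C}\chi(x+c)\right| \leq \lr{\sum_x r(x)}^{1-1/k}\lr{\sum_x r(x)^2}^{1/2k}\lr{\sum_x\left|\sum_{c\in C}\chi(x+c)\right|^{2k}}^{1/2k}.
\]
I would then substitute the three evaluations: $\sum_x r(x)=|A||B|$ for the first factor; $\sum_x r(x)^2=E_\times(A,B)$ for the second; and Lemma \ref{MomentBound}, applied to the set $C$, for the third, bounding the inner $2k$-th moment by $|C|^{2k}2k\sqrt p + (2k|C|)^k p$.

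Finally, to replace $E_\times(A,B)$ by the diagonal energies appearing in the statement, I invoke the multiplicative analogue of Lemma \ref{triangle}, namely $E_\times(A,B)^2\leq E_\times(A,A)E_\times(B,B)$ (the proof is the same Cauchy-Schwarz argument, valid in the abelian group $\FF_p^\times$). Raising to the power $1/4k$ turns $E_\times(A,B)^{1/2k}$ into $E_\times(A,A)^{1/4k}E_\times(B,B)^{1/4k}$, and assembling the three factors yields exactly the claimed estimate. There is no serious obstacle beyond correctly fixing the H\"older exponents; the moment input and the energy inequality are both quoted directly from the preceding lemmas.
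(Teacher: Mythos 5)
Your proof is correct and is essentially identical to the paper's: the same three-fold H\"older split of $r(x)$ with exponents $\left(\tfrac{k}{k-1},\,2k,\,2k\right)$, the same evaluations $\sum_x r(x)=|A||B|$ and $\sum_x r(x)^2=E_\times(A,B)$, the moment bound of Lemma \ref{MomentBound} applied to $C$, and the multiplicative form of Lemma \ref{triangle} to pass to $E_\times(A,A)^{1/4k}E_\times(B,B)^{1/4k}$. The only difference is that you make the H\"older exponent bookkeeping explicit, which the paper leaves implicit.
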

\begin{proof}
Call the left hand side above $S$. Applying H\"older's
inequality \begin{align*}
|S|&\leq\lr{\sum_{x\in\FF_p}r(x)}^{1-1/k}\lr{\sum_{x\in\FF_p}r(x)^2}^{1/2k}\lr{\sum_{x\in\FF_p}\left|\sum_{c\in
C}\chi(x+c)\right|^{2k}}^{1/2k}\\
&=T_1^{1-1/k}T_2^{1/2k}T_3^{1/2k}.
\end{align*}
Now $T_1$ is precisely $|A||B|$ and $T_2$ is the multiplicative energy
$E_\times(A,B)$. By Lemma \ref{triangle} inequality, we have
\[E_\times(A,B)\leq\sqrt{E_\times(A,A)E_\times(B,B)}.\]
The estimate for $T_3$ is an immediate from Lemma \ref{MomentBound}.
\end{proof}

The last ingredient in our proof is the most crucial. Sum-Product estimates are
sensitive to prime fields and allow us to break the square-root barrier. We
record the following estimate of Rudnev. 

\begin{Theorem}[Rudnev]
\label{EnergyEstimate}
Let $A\subset \FF_p$ satisfy $|A|<\sqrt p$. Then \[E_\times(A,A)\ll
|A||A+A|^\frac{7}{4}\log |A|.\]
\end{Theorem}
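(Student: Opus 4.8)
The plan is to deduce this estimate from an incidence bound that is sensitive to the prime field, namely Rudnev's point--plane incidence theorem: for a set of points $\mathcal P$ and a set of planes $\Pi$ in $\FF_p^3$ with $|\mathcal P|\le|\Pi|$ and $|\mathcal P|\ll p^2$, one has $I(\mathcal P,\Pi)\ll|\mathcal P|^{1/2}|\Pi|+k|\Pi|$, where $k$ is the largest number of points of $\mathcal P$ lying on a single line. The hypothesis $|A|<\sqrt p$ is exactly what will let us keep every point set we construct below the $p^2$ threshold, so that the main term $|\mathcal P|^{1/2}|\Pi|$ governs the count. This prime-field input is the mechanism that pushes the estimate past the square-root barrier, in contrast to the completion method used elsewhere in the paper.

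First I would discard $0$ from $A$, whose contribution to $E_\times(A,A)$ is negligible, and record the identity $E_\times(A,A)=|\{(a,b,c,d)\in A^4:ab=cd\}|=\sum_x r(x)^2$, where $r(x)=|\{(a,b)\in A^2:ab=x\}|$. I would then pigeonhole dyadically: partitioning the products $x$ according to $2^j\le r(x)<2^{j+1}$ reduces the problem to bounding $2^{2j}|P_j|$ on each of the $O(\log|A|)$ dyadic levels $P_j$, and it is precisely this decomposition that produces the factor $\log|A|$ in the final bound.

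The heart of the argument is to realise, for a fixed dyadic level of multiplicity $\tau$ with high-multiplicity product set $P_\tau$, the quantity $\tau^2|P_\tau|$ as a constant multiple of the number of incidences between a point set and a plane set in $\FF_p^3$ built so that the \emph{additive} structure of $A$ controls their sizes. Concretely, to linearise the multiplicative relation $ab=cd$ one introduces an auxiliary shift ranging over $A+A$, which forces the relevant points into a Cartesian-type set whose size is controlled by a power of $|A+A|$, while the planes are indexed by $A$ together with $P_\tau$; feeding this configuration into Rudnev's theorem and optimising the parameters is what yields the exponent $\tfrac74$ on $|A+A|$. Summing the resulting bounds over the dyadic levels gives $E_\times(A,A)\ll|A||A+A|^{7/4}\log|A|$.

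I expect the main obstacle to be the degenerate term $k|\Pi|$ in the incidence bound: one must show that no line contains too many of the constructed points, that is, rule out the additive and multiplicative structure conspiring to concentrate points along a line. Controlling this collinearity parameter — equivalently, verifying the non-degeneracy hypotheses of the point--plane theorem for the specific geometric configuration coming from $A+A$ — is the technical crux, and it is what ultimately pins the sumset exponent at $\tfrac74$ rather than at a trivial value. A clean alternative would be to route the same information through the Stevens--de Zeeuw point--line incidence bound for Cartesian products, itself a corollary of Rudnev's theorem, whose $\tfrac34$-type exponents produce the $\tfrac74$ more transparently; either way the prime-field incidence estimate together with the constraint $|A|<\sqrt p$ are the essential ingredients.
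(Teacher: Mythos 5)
You should first be aware that the paper does not prove this statement at all: it is recorded as an external estimate of Rudnev, imported from \cite{R} and used as a black box in the proof of Theorem \ref{MixedSum}. So your sketch has to stand on its own as a proof, and it does not. The dyadic pigeonholing in your second paragraph is fine (and does account for the $\log|A|$ factor), but everything after it is assertion rather than argument. You never write down the point set or the plane set; you never verify that the resulting incidence count actually dominates $\tau^2|P_\tau|$; you cannot check the hypotheses of the point--plane theorem (in particular the $p^2$ threshold, which you claim follows from $|A|<\sqrt p$, and the collinearity parameter $k$) because there is no concrete configuration to check them against; and you never derive the exponent $\frac74$ --- the sentence ``feeding this configuration into Rudnev's theorem and optimising the parameters is what yields the exponent $\frac74$'' \emph{is} the content of the theorem, and it is exactly the part that is missing. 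The same applies to the term $k|\Pi|$: you correctly identify bounding the number of collinear points as the technical crux, and then leave it untouched. A plan that names the right family of tools but black-boxes the construction, the non-degeneracy verification, and the exponent computation has a genuine gap at each of these three places.

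Two further remarks. First, the route you propose cannot be a reconstruction of the proof behind the citation: \cite{R} dates from 2012, while Rudnev's point--plane incidence theorem and the Stevens--de Zeeuw bound you invoke appeared later, so the cited proof necessarily proceeds by different (planar, Solymosi-type) incidence arguments, and the exponent $\frac74$ is an artifact of that method. Second, when the point--plane machinery is actually carried out to relate multiplicative energy to sumsets --- this is essentially what \cite{RNRS} does, and the paper explicitly flags \cite{RNRS} as the stronger, state-of-the-art sum-product input immediately after the theorem --- the optimisation produces bounds of a different (stronger) shape than $|A||A+A|^{7/4}\log|A|$, under size constraints that would then have to be reconciled with $|A|<\sqrt p$ in the regime where the statement is non-trivial. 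That your method, if completed, would naturally land on a different inequality than the one claimed is further evidence that the parameter optimisation you appeal to was never performed: the target exponent has been reverse-engineered rather than obtained.
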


This is not the state of the art for Sum-Product theory in $\FF_p$, which at the time of this writing is found in
\cite{RNRS}, but the above estimate is more readily applied to our situation. Moreover, the strength of the Sum-Product estimates is not the bottleneck for proving non-trivial character sum estimates in a wider range (avoiding completion is).

\section{Ternary sums}\label{Triple}

We begin this section by giving a simple estimate which is non-trivial past the
square-root barrier provided we can control certain additive energy.

\begin{Lemma}\label{energy}
Given subsets $A,B,C\subset \FF_p$ and a non-trivial character $\chi$ we have
\[|S_\chi(A,B,C)|\leq\sqrt{p|A|E_+(B,C)}.\]
\end{Lemma}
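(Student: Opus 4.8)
The plan is to collapse the two inner sums over $B$ and $C$ into a single sum weighted by the representation function of the sumset, and then apply Cauchy--Schwarz so that the additive energy $E_+(B,C)$ appears as one factor while the other factor is a \emph{complete} character sum over $\FF_p$ that can be evaluated by orthogonality.

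First I would set $r(y)=|\{(b,c)\in B\times C: b+c=y\}|$, so that $\sum_y r(y)^2=E_+(B,C)$ and
\[S_\chi(A,B,C)=\sum_{a\in A}\sum_{y}r(y)\chi(a+y)=\sum_{y}r(y)\sum_{a\in A}\chi(a+y),\]
where $y$ ranges over $B+C$. Applying Cauchy--Schwarz in the variable $y$ and then enlarging the range of the second factor from $B+C$ to all of $\FF_p$ (which only increases it), I obtain
\[|S_\chi(A,B,C)|\leq\lr{\sum_{y}r(y)^2}^{1/2}\lr{\sum_{y\in\FF_p}\left|\sum_{a\in A}\chi(a+y)\right|^2}^{1/2}=E_+(B,C)^{1/2}\,\Sigma^{1/2},\]
where $\Sigma$ denotes the complete second moment.

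The key step is to evaluate $\Sigma$ exactly. Expanding the square gives $\Sigma=\sum_{a,a'\in A}\sum_{y\in\FF_p}\chi(a+y)\bar{\chi}(a'+y)$. The diagonal terms $a=a'$ each contribute $\sum_y|\chi(a+y)|^2=p-1$. For $a\neq a'$, writing $\bar{\chi}(u)=\chi(u^{-1})$ for $u\neq0$ and making the substitution $u=a'+y$, the inner sum becomes $\sum_{w\neq 1}\chi(w)=-1$ by orthogonality of the nontrivial character $\chi$. Hence $\Sigma=|A|(p-1)-|A|(|A|-1)=|A|(p-|A|)\leq p|A|$, and inserting this into the displayed inequality yields exactly $|S_\chi(A,B,C)|\leq\sqrt{p|A|E_+(B,C)}$.

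I do not anticipate a genuine obstacle here: the argument is a single application of Cauchy--Schwarz followed by a standard completion and orthogonality computation. The only points requiring a little care are the passage from the support $B+C$ to the full field $\FF_p$ in the second factor, and the bookkeeping in the M\"obius-type substitution that produces the value $-1$ for each off-diagonal character sum; both are routine.
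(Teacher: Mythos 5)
Your proof is correct and follows essentially the same route as the paper: define the representation function $r$, apply Cauchy--Schwarz so that $\sum_y r(y)^2=E_+(B,C)$ appears, and bound the completed second moment $\sum_{y\in\FF_p}\left|\sum_{a\in A}\chi(a+y)\right|^2$ by $p|A|$ via orthogonality. The only difference is that you carry out the orthogonality computation explicitly (obtaining the exact value $|A|(p-|A|)$), where the paper simply cites the standard calculation from its introduction.
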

\begin{proof}
Let $r(x)$ be the number of ways in which $x\in\FF_p$ is a sum $x=b+c$ with
$b\in B$ and $c\in C$. Then
\begin{align*}|S(A,B,C)|&\leq\sum_{x\in\FF_p}r(x)\left|\sum_{a\in
A}\chi(a+x)\right|\\ &\leq
\lr{\sum_{x\in\FF_p}r(x)^2}^{1/2}\lr{\sum_{x\in\FF_p}\left|\sum_{a\in
A}\chi(a+x)\right|^2}^{1/2}.\end{align*} It is straightforward to check that the
first factor above is $\lr{E_+(B,C)}^{1/2}$ and as before, the second factor is
$\lr{p|A|}^{1/2}$.
\end{proof}

\begin{Lemma}\label{arg}
Let $z_1,\ldots,z_n$ be complex numbers with $|\arg z_1-\arg z_j|\leq \delta$.
Then \[|z_1+\ldots +z_n|\geq (1-\delta)(|z_1|+\ldots+|z_n|).\]
\end{Lemma}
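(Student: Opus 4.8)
The plan is to reduce the claim to a one-dimensional estimate. Since multiplying every $z_j$ by the unit complex number $e^{-i\arg z_1}$ alters neither $|z_1+\cdots+z_n|$ nor the moduli $|z_j|$ nor the differences of arguments, I would first rotate the whole configuration so that $z_1$ is a nonnegative real number. After this normalization the hypothesis reads $|\arg z_j|\leq\delta$ for every $j$, and the problem becomes one about angles measured from the positive real axis.

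Next I would discard the imaginary part of the sum: the modulus of any complex number dominates its real part, so
\[|z_1+\cdots+z_n|\geq\operatorname{Re}(z_1+\cdots+z_n)=\sum_{j=1}^n|z_j|\cos(\arg z_j).\]
The entire content of the lemma is now the elementary fact that each of these cosines is close to $1$ when the corresponding angle is small.

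Here it is cleanest to dispose of the large-$\delta$ case first. When $\delta>1$ the right-hand side $(1-\delta)(|z_1|+\cdots+|z_n|)$ is nonpositive whereas the left-hand side is a modulus and hence nonnegative, so the inequality holds trivially. For the remaining range $0\leq\delta\leq 1<\pi/2$, each angle $\arg z_j$ lies in $[-\delta,\delta]\subset(-\pi/2,\pi/2)$, an interval on which cosine is even and decreasing away from the origin; therefore $\cos(\arg z_j)\geq\cos\delta$. Substituting this bound into the display above gives $|z_1+\cdots+z_n|\geq\cos\delta\,(|z_1|+\cdots+|z_n|)$.

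It then remains to absorb the cosine into the advertised factor, i.e.\ to check the scalar inequality $\cos\delta\geq 1-\delta$ for $\delta\geq 0$. I would verify this by setting $g(\delta)=\cos\delta-(1-\delta)$ and observing that $g(0)=0$ while $g'(\delta)=1-\sin\delta\geq 0$, so $g$ is nondecreasing and hence nonnegative on $[0,\infty)$. Chaining this with the previous inequality yields the lemma. There is no substantive obstacle in this argument; the only point demanding a little attention is bookkeeping on the range of $\delta$, since the monotonicity of cosine is only legitimate once the angles are confined to $(-\pi/2,\pi/2)$, which is precisely why the case $\delta>1$ is peeled off at the start.
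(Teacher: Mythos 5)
Your proof is correct, and it takes a genuinely different route from the paper's. The paper never passes to real parts: it writes $|z_1|+\cdots+|z_n|=\theta_1 z_1+\cdots+\theta_n z_n$ with unimodular multipliers $\theta_j=e^{-i\arg z_j}$, regroups this as $\theta_1(z_1+\cdots+z_n)+\sum_{j\geq 2}(\theta_j-\theta_1)z_j$, and applies the triangle inequality together with the chord-length bound $|\theta_1-\theta_j|\leq|\arg z_1-\arg z_j|\leq\delta$, which gives $(1-\delta)$ directly with no case analysis on $\delta$. You instead normalize $z_1$ to the positive real axis, drop the imaginary part of the sum via $|w|\geq\operatorname{Re}(w)$, and bound each cosine below. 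Your version buys a quantitatively stronger conclusion, namely the factor $\cos\delta\geq 1-\delta^2/2$, which beats $1-\delta$ for small $\delta$ (the regime in which the lemma is actually applied, with $\delta=\tfrac12$); the price is the bookkeeping you already flagged, splitting off $\delta>1$ so that the angles stay in $(-\pi/2,\pi/2)$ where monotonicity of cosine is available, whereas the paper's identity-plus-triangle-inequality argument is branch-free and uniform in $\delta$. Both arguments are one paragraph of elementary estimates, and either suffices for the application in the proof of Theorem \ref{TripleSum} and Theorem \ref{MixedSum}.
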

\begin{proof}
We have
\begin{align*}|z_1|+\ldots+|z_n|&=\theta_1z_1+\ldots+\theta_nz_n\\&=\theta_1(z_1+\ldots+z_n)+(\theta_2-\theta_1)z_2+\ldots+(\theta_n-\theta_1)z_n\end{align*}
for some complex numbers $\theta_k$ of modulus 1 with $|\theta_1-\theta_j|\leq
\delta$.
Thus by the triangle inequality \[|z_1|+\ldots+|z_n|\leq
|z_1+\ldots+z_n|+\delta(|z_2|+\ldots+|z_n|)\] and the result follows.
\end{proof}

We are now able prove Theorem \ref{TripleSum}. Ignoring technical details for the moment, either we are in a situation where Lemma \ref{energy} improves upon the trivial
estimate, or else we can appeal to the Balog-Szemer\'edi-Gowers Theorem and
deduce that $A$ has a subset with small sumset. In the latter case we can make
use of Chang's Theorem and also arrive at a non-trivial estimate, even saving a
power of $p$.
Unfortunately, this second scenario does not come in to play until one of the
sets has a lot of additive energy. This means that the saving from Lemma \ref{energy}
will become quite poor before we are rescued by Chang's estimate. We proceed
with the proof proper.

\begin{proof}[Proof of Theorem \ref{TripleSum}] Suppose, by way of
contradiction, that the theorem does not hold. This means that there is some
positive constant $\eps>0$ such that for $p$ arbitrarily large, we have sets
$A,B,C\subset\FF_p$ with $|A|,|B|,|C|\geq \delta\sqrt{p}$, and a non-trivial
character $\chi$ of $\FF_p^\times$ satisfying
\[|S_\chi(A,B,C)|\geq\eps|A||B||C|.\] It follows that
\[\eps|A||B||C|\leq\sum_{a\in A}|S_\chi(B,a+C)|.\] If we let \[A'=\{a\in
A:|S_\chi(B,a+C)|\geq \frac{\eps}{2}|B||C|\}\] then
\[\frac{\eps}{2}|A||B||C|\leq \sum_{a\in A'}|S_\chi(B,a+C)|\] and $|A'|\geq
|A|\eps/2$. Now by the same argument as in the proof of Lemma \ref{energy}, we
must have \[\frac{\eps^2}{4}|A|^2|B|^2|C|^2\leq p|C|E_+(A',B)\leq
p|C|E_+(A',A')^{1/2}E_+(B,B)^{1/2},\] the last inequality being a consequence of
Lemma \ref{triangle}. So, using that $|A|,|B|,|C|\geq \delta\sqrt p$ and
$E_+(B,B)\leq |B|^3$, we have \[E_+(A',A')\geq \frac{\eps^4\delta^4}{16}|A'|^3\]
and so by Theorem \ref{BSG} and Lemma \ref{sumset} we can find a subset
$A''\subset A'$, with size at least $(\eps\delta)^{t}\sqrt p$ and such that
$|A''+A''|\leq (\eps\delta)^{-t}|A''|$ for some $t=O(1)$. Now since $A''\subset
A'$, we have \[\frac{\eps}{2}|A''||B||C|\leq \sum_{a\in A''}|S_\chi(B,a+C)|.\]
By the pigeon-hole principle, after passing to a subset of $A'''$ of size
$|A''|/16$, we can assume that the complex numbers $S_\chi(B,a+C)$ all have
argument within
 $\frac{1}{2}$ of each other. Thus, by Lemma \ref{arg}, we have
 \[\frac{\eps}{4}|A'''||B||C|\leq \left|S_\chi(A''',B,C)\right|,\] we have
 $|A'''|\geq (\eps\delta)^{t}\sqrt
p/16$, and we have \[|A'''+A'''|\leq|A''+A''|\leq (\eps\delta)^{-t}|A''|\leq
16(\eps\delta)^{-t}|A'''|.\] However, by the triangle inequality, this implies
that \[\frac{\eps}{4}|A'''||B+c|\leq \max_{c\in
C}\left|S_\chi(A''',B+c)\right|.\] This is in clear violation of Theorem
\ref{Chang} provided $p$ is sufficiently large in terms of $\delta$ and $\eps$.
Thus we have arrived at the desired contradiction.
\end{proof}

\section{Mixed quaternary sums}\label{Mixed}

We now turn to the estimation of the sums $H_\chi(A,B,C,D)$. 
First we consider an auxiliary ternary character sum with a
multiplicative convolution. \[M_\chi(A,B,C)=\sum_{a\in A}\sum_{b\in B}\sum_{c\in
C}\chi(a+bc).\] We can bound $M_\chi$ in terms of the \emph{multiplicative energy}
\[E_\times(X,Y)=|\{(x_1,x_2,y_1,y_2)\in X\times X\times Y\times
Y:x_1y_1=x_2y_2\}|.\] As before, this satisfies the bound
\[E_\times(X,Y)^2\leq E_\times(X,X)E_\times(Y,Y).\]

Now, using Sum-Product estimates, if the sets had enough additive structure, we could
bound the multiplicative energy non-trivially and make an improvement.
This is essentially Burgess' argument, though he did not use Sum-Product theory;
rather, since he was working with arithmetic progressions, the multiplicative
energy could be bounded directly. 

By fixing one element in the sum $H_\chi(A,B,C,D)$, we can view
it as a ternary sum in two different ways. First,
\[H_\chi(A,B,C,D)=\sum_{d\in D}S_\chi(A,B,d\cdot C)\] where $d\cdot C$ is the
dilate of $C$ by $d$. We can use Lemma \ref{energy} to bound this sum 
non-trivially whenever we can bound $E_+(C,C)$ non-trivially. If not, we can write
\[H_\chi(A,B,C,D)=\sum_{a\in A}M_\chi(a+B,C,D)\] instead and try to bound this
non-trivially using Lemma \ref{BasicBurgess}, which we can do if $E_\times(C,C)$
is smaller than $|C|^3$. By making some simple manipulations to $H_\chi$ and using a
sum-product estimate, we will be able to guarantee one of these facts holds.

Before presenting our proof, we mention that A. Balog has communicated to us a forthcoming result with T. Wooley which asserts:

\begin{UnnumberedTheorem}
There is a positive $\delta$ such that any $X\subset \FF_p$ can be decomposed as $X=Y\cup Z$ with $E_+(Y,Y)\leq |Y|^{3-\delta}$ and $E_\times(Z,Z)\leq |Z|^{3-\delta}$.
\end{UnnumberedTheorem}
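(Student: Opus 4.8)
The plan is to prove the decomposition by an iterative \emph{peeling} argument driven by the additive energy, using the sum--product input of Theorem~\ref{EnergyEstimate} to certify that the pieces we remove are multiplicatively unstructured. It is worth stressing at the outset that the statement genuinely requires a decomposition: a set $X=P\cup G$, with $P$ an arithmetic progression and $G$ a geometric progression of the same size, satisfies both $E_+(X,X)\gg|X|^3$ and $E_\times(X,X)\gg|X|^3$, so there is no hope of declaring a single large set to be simultaneously additively and multiplicatively unstructured. The whole content is to locate the correct partition, and for this example it is the obvious split into $P$ and $G$.

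The engine is the following asymmetric extraction lemma: \emph{if} $E_+(W,W)\geq |W|^{3-\delta}$, \emph{then} $W$ contains a subset $W'$ with $|W'|\gg|W|^{1-\delta}$ and $E_\times(W',W')\leq |W'|^{3-\delta'}$ for some $\delta'>0$. To prove it, write $E_+(W,W)\geq |W|^3/K$ with $K=|W|^\delta$ and apply Theorem~\ref{BSG} to obtain $W'\subseteq W$ with $|W'|\gg |W|K^{-1}(\log(e|W|))^{-2}$ and $|W'-W'|\ll K^4|W'|^3(\log|W|)^8|W|^{-2}$. A short computation shows $|W'-W'|\ll|W'|^{1+O(\delta)}$ up to logarithmic factors, and passing from the difference set to the sumset by standard Pl\"unnecke--Ruzsa estimates (in the spirit of Lemma~\ref{sumset}) gives $|W'+W'|\ll|W'|^{1+O(\delta)}$. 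Feeding this into Rudnev's estimate, Theorem~\ref{EnergyEstimate}, yields
\[E_\times(W',W')\ll |W'|\,|W'+W'|^{7/4}\log|W'|\ll |W'|^{11/4+O(\delta)}.\]
Since $11/4<3$, for $\delta$ small enough the exponent is at most $3-\delta'$ with $\delta'>0$ absolute, which is the claim. (Here one needs $|W'|<\sqrt p$ to invoke Theorem~\ref{EnergyEstimate}; when the extracted pieces exceed $\sqrt p$ one must instead use a sum--product estimate valid in that range, a separate technical point.)

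With the lemma in hand the decomposition is built greedily. Initialise $Z=\emptyset$ and $W=X$; while $E_+(W,W)>|W|^{3-\delta}$, extract a subset $W'$ as above, move it into $Z$, and replace $W$ by $W\setminus W'$. When the loop halts, set $Y=W$, which by the stopping condition obeys $E_+(Y,Y)\leq|Y|^{3-\delta}$, and we take the final $\delta$ to be $\min(\delta,\delta')$.

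The main obstacle is the accounting for $Z$. Although each removed piece $W_i'$ has small multiplicative energy, the union $Z=\bigcup_i W_i'$ satisfies
\[E_\times(Z,Z)=\sum_{x}\bigl|\{(z,z')\in Z\times Z: zz'=x\}\bigr|^2,\]
whose \emph{cross terms} between distinct pieces need not be small; multiplicative energy is not subadditive, so one cannot simply sum the individual bounds. This is aggravated by the fact that Theorem~\ref{BSG} only returns a piece of size $\gg|W|^{1-\delta}$, so the procedure may run for polynomially many steps, and any crude accumulation of cross contributions would be fatal. Resolving this is the technical heart of the argument: the natural route is to recast the peeling as a stopping-time argument governed by a single potential (the multiplicative energy still available in the current set), with the threshold at each stage tuned so that this potential falls geometrically, forcing $E_\times(Z,Z)$ to remain below $|Z|^{3-\delta}$ despite the cross terms. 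It is precisely the interplay between the loss $K^4$ in Theorem~\ref{BSG}, the gap $3-\tfrac{11}{4}$ afforded by Theorem~\ref{EnergyEstimate}, and the number of iterations that pins down the admissible value of $\delta$ in the statement.
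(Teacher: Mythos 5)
First, a point of comparison: the paper does not actually prove this statement --- it is quoted as a forthcoming result of Balog and Wooley, with only the remark that its proof uses ideas similar to those in the proof of Theorem \ref{MixedSum}. So your proposal can only be judged on its own terms. Your extraction engine is sound and is indeed the expected mechanism: from $E_+(W,W)\geq |W|^3/K$ with $K=|W|^{\delta}$, Theorem \ref{BSG} plus the Ruzsa--Pl\"unnecke passage from $|W'-W'|$ to $|W'+W'|$ and then Theorem \ref{EnergyEstimate} do give $E_\times(W',W')\ll |W'|^{11/4+O(\delta)}$ up to logarithms, and $11/4<3$ leaves room for an absolute $\delta'$. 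But the proof is incomplete at exactly the place you yourself call its ``technical heart'': the bound on $E_\times(Z,Z)$ for the union of the peeled pieces is left as an aspiration (``a stopping-time argument governed by a single potential''), not an argument. This gap is real but closable by a standard quasi-subadditivity that you appear not to know: writing $r_{ij}(x)$ for the number of representations $x=zz'$ with $z\in W_i'$, $z'\in W_j'$, the triangle inequality in $\ell^2$ together with Lemma \ref{triangle} (in its multiplicative form) gives $E_\times(Z,Z)^{1/2}\leq \sum_{i,j}E_\times(W_i',W_j')^{1/2}\leq \bigl(\sum_i E_\times(W_i',W_i')^{1/4}\bigr)^2$, so the quantity $E_\times(\cdot,\cdot)^{1/4}$ is effectively subadditive over the pieces. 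Since while $|W|\sim S$ each extraction removes $\gg S^{1-\delta}(\log S)^{-2}$ elements, the loop runs $O(S^{\delta}\log^2 S)$ times per dyadic scale, whence $\sum_i E_\times(W_i',W_i')^{1/4}\ll |X|^{\delta+(3-\delta')/4}(\log |X|)^{O(1)}$; combined with the observation that any nonempty $Z$ contains the first extracted piece and so has $|Z|\gg |X|^{1-\delta}(\log|X|)^{-2}$, this yields $E_\times(Z,Z)\ll |Z|^{3-\delta'+O(\delta)}$, which suffices after shrinking $\delta$. Some such accounting must be supplied; without it you have only proved the extraction lemma, not the theorem.

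The second gap is more serious than you allow: the restriction $|W'|<\sqrt p$ in Theorem \ref{EnergyEstimate} is not ``a separate technical point'' that a different estimate will absorb, because the statement as transcribed in the paper is false for dense $X$, and hence no proof can cover that regime. Indeed, if $X=\FF_p$ then any partition $X=Y\cup Z$ has a part $S$ of size at least $p/2$, and by Cauchy--Schwarz every set satisfies $E_+(S,S)\geq |S|^4/p$ and $E_\times(S,S)\geq (|S|-1)^4/(p-1)$ (the relevant sums and products take at most $p$, respectively $p-1$, values); for large $p$ either bound exceeds $|S|^{3-\delta}$, whichever role $S$ plays. So any correct version must carry a size hypothesis, say $|X|\leq p^{1-c}$ (the published form of the Balog--Wooley theorem in $\FF_p$ does), and the clean way to repair your write-up is to restrict the claim to, e.g., $|X|\leq \sqrt p$, under which every peeled piece automatically satisfies the hypothesis of Theorem \ref{EnergyEstimate}. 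Your opening observation that a single set cannot be simultaneously unstructured (an arithmetic progression unioned with a geometric one) is correct and well taken, but note it shows only that a decomposition is necessary, whereas the density obstruction above shows the quoted statement itself needs amending.
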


The proof of this result uses ideas similar to those in our proof of Theorem \ref{MixedSum}, and implies a non-trivial estimate for $H_\chi$. Indeed, decomposing $C=Y\cup Z$ as in the theorem,
\[|H_\chi(A,B,C,D)|\leq |H_\chi(A,B,X,D)|+|H_\chi(A,B,Y,D)|.\] Estimating each of these sums as we mentioned above, gives a non-trivial bound for $|H_\chi(A,B,C,D)|$.

Now we proceed to our proof of Theorem \ref{MixedSum}.

\begin{proof}[Proof of Theorem \ref{MixedSum}]
Let $2\leq k \ll \log p$ be a (large) parameter. First we handle the
case $|C|<\sqrt p$. Let us write \[|H_\chi(A,B,C,D)|=\Delta|A||B|||C||D|\] so
that our purpose is to estimate $\Delta$. Let \[C_1=\left\{c\in C:|S_\chi(A,B,c\cdot
D)|\geq\frac{\Delta|A||B||D|}{2}\right\}.\] We have that for any $C_2\subset C_1$
\[\frac{|C_2|}{2|C|}|H_\chi(A,B,C,D)|= |C_2|\frac{\Delta|A||B||D|}{2}\leq \sum_{c\in C_2}|S_\chi(A,B,c\cdot D)|,\] and
using that the inner quantities are at most $|A||B||D|$, we also have \[|C_1|\geq
\frac{\Delta}{2}|C|.\] Now, passing to a subset $C_2$ of $C_1$ of size at
least \[|C_2|\geq \frac{|C_1|}{16}\geq \frac{\Delta}{32}|C|,\] we can assume that the
complex numbers $S_\chi(A,B,c\cdot D)$ with $c\in C_2$ all have arguments within
$\frac{1}{2}$ of each other, so that by Lemma \ref{arg} we have
\begin{equation}\label{lowerBound}
\frac{|C_3|}{4|C|}|H_\chi(A,B,C,D)|\leq \left|\sum_{c\in
C_3}S_\chi(A,B,c\cdot D)\right|=|H_\chi(A,B,C_3,D)|
\end{equation}
whenever $C_3$ is a subset of $C_2$.
In particular, if $C_3=C_2$ we have
\[\frac{\Delta^2}{128}|A||B||C||D|\leq\frac{|C_2|}{4|C|}|H_\chi(A,B,C,D)|\leq 
\sum_{d\in D}|S_\chi(A,B,d\cdot C_2)|.\] Now in view of Lemma \ref{energy}, we
see that \begin{align*}\frac{\Delta^2}{128}|A||B||C||D|&\leq|D|\max_{d\in
D}\sqrt{p|A|E_+(B,d\cdot C_2)}\\&\leq
\sqrt{p}|D||A|^{1/2}|B|^{3/4}E_+(C_2,C_2)^{1/4},\end{align*} having bounded $E_+(B,B)$ trivially by $|B|^3$. Thus
\[E_+(C_2,C_2)\geq \frac{\Delta^8}{128^4}|A|^2|B||C|^4p^{-2}\geq
\lr{\frac{\Delta^8}{128^4}|A|^2|B||C|p^{-2}}|C_2|^3.
\] For convenience, write $K^{-1}=\frac{\Delta^8}{128^4}|A|^2|B||C|p^{-2}$. By
Theorem \ref{BSG} there is a subset $C_3\subset C_2$ of size at least
$\frac{|C_2|}{K(\log p)^2}$ and such that \[|C_3-C_3|\ll
K^4\frac{|C_3|^2(\log p)^8}{|C_2|^2}|C_3|.\] In particular, by Theorem
\ref{EnergyEstimate} we have \begin{align*}
E_\times(C_3,C_3)&\ll
|C_3|K^7\lr{\frac{|C_3|^2(\log p)^8}{|C_2|^2}}^{7/4}|C_3|^{7/4}\log
p\\&=K^7|C_3|^{25/4}|C_2|^{-7/2}(\log p)^{15}.
\end{align*}
Inserting this into equation (\ref{lowerBound}), we get
\begin{align*}\frac{\Delta}{4}|A||B||C_3||D|&=\frac{|C_3|}{4|C|}|H_\chi(A,B,C,D)|\\&\leq
|H_\chi(A,B,C_3,D)|\\&\leq\sum_{a\in A}|M_\chi(a+B,C_3,D)|.\end{align*} Next we
apply Lemma \ref{BasicBurgess} to obtain that
\begin{multline*}\frac{\Delta}{4}|A||B||C_3||D|\ll
|A|(|D||C_3|)^{1-\frac{1}{k}}(E_\times(D,D)E_\times(C_3,C_3))^{1/4k}\times\\\times\lr{|B|^{2k}2k\sqrt
p+(2k|B|)^kp}^{1/2k}\end{multline*} which implies (after bounding $E_\times(D,D)$
trivially by $|D|^3$) \[\Delta^{4k} \ll|D|^{-1}|C_3|^{-4}
E_\times(C_3,C_3)\lr{2k\sqrt p+(2k|B|^{-1})^kp}^2.\]
Since $2\leq k\ll \log p$ and $|B|\geq p^\delta$, the final factor is at
most $O(p(\log p)^{2k})$ as long as $k>\frac{1}{2\delta}$, and after inserting
the upper bound for $E_\times(C_3,C_3)$ we have \[\Delta
^{4k}\ll|D|^{-1}
K^7|C_3|^{9/4}|C_2|^{-7/2}(\log
p)^{2k+15}p.\]
Now we substitute $K^{-1}=\frac{\Delta^8}{128^4}|A|^2|B||C|p^{-2}$ and see 
\[\Delta^{4k+56}\ll|D|^{-1}|A|^{-14}|B|^{-7}|C|^{-7}|C_3|^{9/4}|C_2|^{-7/2}(\log
p)^{2k+15}p^{15}.\]
Bounding $|C_3|\leq |C_2|$ and $|C_2|\gg\Delta|C|$ we get
\[\Delta^{4k+\frac{229}{4}}\ll|D|^{-1}|A|^{-14}|B|^{-7}|C|^{-\frac{33}{4}}(\log
p)^{2k+15}p^{15}.\] Upon taking $4k$'th roots we have
\[\Delta^{1+229/16k}\ll
\lr{|D|^{-1}|A|^{-14}|B|^{-7}|C|^{-\frac{33}{4}}p^{15}}^{1/4k}(\log
p)^{1/2+15/4k}.\] Since
\[|D|^4|A|^{56}|B|^{28}|C|^{33}\geq p^{60+\eps},\] the quantity in brackets on
the right is at most $p^{-\eps/4}$. This shows that we must have
$\Delta<p^{-\tau}$ for some $\tau >0$ depending only on $\eps$ and $\delta$.
This is because we only needed $k$ to be sufficiently large in terms of
$\delta$.

If $|C|>\sqrt p$ then we
can break $C$ into a disjoint union of $m\approx \frac{|C|}{\sqrt p}$ sets
$C_1,\ldots, C_m$ of size at most $\sqrt p$.
Then \[|H_\chi(A,B,C,D)|\leq\sum_{j}|H_\chi(A,B,C_j,D)|.\] We obtain a
savings of $p^{-\tau}$ for each $H_\chi(A,B,C_j,D)$ and hence for
$H_\chi(A,B,C,D)$ provided
\[|D|^4|A|^{56}|B|^{28}|C_j|^{33}\gg|D|^4|A|^{56}|B|^{28}p^{33/2}\geq
p^{60+\eps}\] which is guaranteed by hypothesis (with $2\eps$ in place of
$\eps$).
\end{proof}

\end{document}